\newtheorem{theorem}{Theorem}[section]
\newtheorem{lemma}[theorem]{Lemma}
\newtheorem{proposition}[theorem]{Proposition}
\newtheorem{corollary}[theorem]{Corollary}
\newtheorem{remark}[theorem]{Remark}
\newtheorem{question}[theorem]{Question}
\newtheorem{claim}{Claim}
\title[On the equation of string oscillation]
{Separately twice differentiable functions and the equation of string oscillation}
\author[T.~Banakh, V.~Mykhaylyuk]{Taras Banakh and Volodymyr Mykhaylyuk}
\address[T.Banakh]{Ivan Franko National University of Lviv, Ukraine and\newline Uniwersytet Humanistyczno-Przyrodniczy Jana Kochanowskiego, Kielce, Poland.}
\email{t.o.banakh@gmail.com}
\urladdr{http://www.franko.lviv.ua/faculty/mechmat/Departments/Topology/bancv.html}
\address[V.Mykhaylyuk]{Department of Mathematical Analysis,
Yuriy Fedkovych Chernivtsi National University,
 Kotsjubynskogo str. 2, Chernivtsi 58012, Ukraine.}
\email{vmykhaylyuk@ukr.net}
\keywords{Separately differentiable functions,
partial differential equations.}
\subjclass[2010]{26B05; 35A99}
\begin{document}

\begin{abstract}
We prove that for every separately twice differentiable function $f:\mathbb R^2\to\mathbb
R$ with that $f''_{xx}=f''_{yy}$ there exist twice differentiable
functions $\varphi, \psi:\mathbb R\to\mathbb R$ such that $f(x,y)=\varphi(x+y) + \psi(x-y)$.
\end{abstract}
\maketitle

\section{Introduction}

Let $f:X\times Y\to Z$ be a mapping defined on a product $X\times Y$ and valued in $Z$. For any
$x\in X$ and $y\in Y$ we define mappings $f^x:Y\to Z$ and
$f_y:X\to Z$ by the equalities: $f^x(y)=f_y(x)=f(x,y)$.
We say that {\it a mapping $f$ separately has $P$} for some
property $P$ of mappings (continuity, differentiability, twice differentiability, etc.) if
for any $x\in X$ and $y\in Y$ the mappings $f^x$ and $f_y$ have
$P$.

R.~Baire in the fifth section of his PhD thesis \cite{Baire} raised a problem
of solving differential equations with partial derivatives
under minimal requirements, that is, a problem of solving a
differential equation in the class of functions satisfied strictly
necessary conditions for the existence of expressions which are
contained in this equation. He proved that a jointly
continuous separately differentiable function $f:\mathbb
R^2\to\mathbb R$ is a solution of the equation
$$
\frac{\partial f}{\partial x} + \frac{\partial f}{\partial y}=0
\eqno(1)
$$
if and only if there exists
a differentiable function $\varphi:\mathbb R\to\mathbb R$ such
that $f(x,y)=\varphi(x-y)$ for any $x,y\in \mathbb R$.
In connection with this R.~Baire naturally raised the following question.

\begin{question} \label{q1}\cite[p.118]{Baire}
Let $f:\mathbb
R^2\to\mathbb R$ be a separately differentiable solution of (1).
Does there exist a differentiable function $\varphi:\mathbb
R\to\mathbb R$ such that $f(x,y)=\varphi(x-y)$ for any $x,y\in
\mathbb R$?
\end{question}

Note that a similar result to Baire's one was independently
obtained in \cite{Cher}, where Question \ref{q1} was formulated too.

During last 100 years differential equations with partial derivatives were studied intensively by many mathematicians.
Their investigations lead to the appearance of the theory  of partial differential equations, the basic notion of which is the notion of generalized function. Note that the generalized functions do not give an immediate possibility to solve differential equations with partial derivatives in the classes of separately differentiable functions or separately
twice differentiable functions, which is tightly connected with the fact that the partial derivative of a separately differentiable function can be locally non-integrable on  a set of a positive measure. Therefore, an application of the method of generalized functions to solving differential equations with partial derivatives in the classes of separately differentiable functions leads to the following question.
Is it true that  a generalized function generated by a bounded separately continuous solution of some equation is also a solution of the same equation?

The following argument shows that this question in general case has a negative answer. It is well known that $u''_{xy}=u''_{yx}$ for every generalized function $u$. It is easy to construct an example of  a function $f:\mathbb R^2\to \mathbb R$ which everywhere has mixed derivatives which are different at  some point (i.e.,
$f(x,y)=\frac{xy(x^2-y^2)}{x^2+y^2}$, if $x^2+y^2>0$, and
$f(0,0)=0$). Since two almost everywhere equal functions on $\mathbb R^2$ generate the same
generalized function, it is natural to assume that
$f''_{xy}\stackrel{\mbox{\footnotesize{a.e.}}}{=}f''_{yx}$ for every function $f:\mathbb R^2\to \mathbb R$ which everywhere has mixed derivatives of the second order. In particular, the equality $f''_{xy}\stackrel{\mbox{\footnotesize{a.e.}}}{=}f''_{yx}$ was obtained in \cite{Tol1} for every function $f:\mathbb R^2\to \mathbb R$ which everywhere has all partial derivatives of the second order. But for an arbitrary function $f:\mathbb R^2\to \mathbb R$ this assertion is not true. G.Tolstov in \cite{Tol2} constructed two functions $F,G:[0,1]^2\to \mathbb R$ which satisfy the following conditions:

$a)$ $F$ has everywhere mixed derivatives
$F''_{xy}$ and $F''_{yx}$, but $F''_{xy}- F''_{yx}=\chi_E$, where
$E$ is a set of a positive measure and $\chi_E$ is the characteristic  function of $E$;

$b)$ $G$ has almost everywhere mixed derivatives $G''_{xy}$ and $G''_{yx}$, but
$G''_{xy}\stackrel{\mbox{\footnotesize{a.e.}}}{\ne}G''_{yx}$ on
$[0,1]^2$.

Thus $F$ and $G$ generate generalized solutions of the equation $u''_{xy}=u''_{yx}$ but fail to be classical solutions of this equation. On the other hand, $F$ is a bounded classical solution of the equation $u''_{xy}-
u''_{yx}=\chi_E$ which has no generalized solutions.

In \cite{M, BPPT, KM} properties of solutions of the equation
$$
\frac{\partial u}{\partial x}\cdot \frac{\partial u}{\partial
y}=0, \eqno(2)
$$
were studied. It was obtained that separately continuous solution of
(2) depends at most on one variable.

A technique from \cite{Baire} was developed in \cite{MM}. It was established that Question \ref{q1} has a positive answer, which implies the following result.

\begin{theorem}\label{t1}\cite[Theorem 6.2]{MM}
Let a function
$f:\mathbb R^2\to\mathbb R$ has all partial derivatives of the second order and
$$
f''_{xx}(p)=f''_{yy}(p)\,\,\,\,\,\mbox{and}\,\,\,\,\,f''_{xy}(p)=f''_{yx}(p)
$$
for every $p\in \mathbb R^2$. Then there exist twice differentiable functions $\varphi, \psi:\mathbb
R\to\mathbb R$ such that
$$
f(x,y)=\varphi(x+y) + \psi(x-y)
$$
for every $x,y\in\mathbb R$.
\end{theorem}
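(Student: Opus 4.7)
The plan is to factor the wave operator as $\partial^2_{xx}-\partial^2_{yy}=(\partial_x-\partial_y)(\partial_x+\partial_y)$ and reduce the problem to two first-order PDEs of the type handled by the positive answer to Baire's Question \ref{q1}. Introduce the auxiliary functions
$$
g(x,y):=f'_x(x,y)+f'_y(x,y),\qquad h(x,y):=f'_x(x,y)-f'_y(x,y).
$$
Since all four second partial derivatives $f''_{xx},f''_{yy},f''_{xy},f''_{yx}$ exist at every point by hypothesis, $g$ and $h$ are separately differentiable, and a direct use of the two identities $f''_{xx}=f''_{yy}$ and $f''_{xy}=f''_{yx}$ yields
$$
g'_x-g'_y=f''_{xx}+f''_{yx}-f''_{xy}-f''_{yy}=0,\qquad h'_x+h'_y=f''_{xx}-f''_{yx}+f''_{xy}-f''_{yy}=0.
$$

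The next step invokes the affirmative answer to Question \ref{q1} established in \cite{MM}. Applied directly to $h$, it produces a differentiable $H:\mathbb R\to\mathbb R$ with $h(x,y)=H(x-y)$; applied to $(x,y)\mapsto g(x,-y)$, which satisfies the same Baire equation $\partial_x+\partial_y=0$, it produces a differentiable $G:\mathbb R\to\mathbb R$ with $g(x,y)=G(x+y)$. Solving for $f'_x$ and $f'_y$ gives
$$
f'_x=\tfrac{1}{2}\bigl(G(x+y)+H(x-y)\bigr),\qquad f'_y=\tfrac{1}{2}\bigl(G(x+y)-H(x-y)\bigr).
$$

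To recover the desired decomposition, set $\varphi(t):=\tfrac{1}{2}\int_0^t G(s)\,ds$ and $\psi(t):=\tfrac{1}{2}\int_0^t H(s)\,ds$. These integrals are well-defined because $G$ and $H$, being differentiable, are continuous; the fundamental theorem of calculus gives $\varphi'=G/2$ and $\psi'=H/2$, and the differentiability of $G$ and $H$ then promotes $\varphi$ and $\psi$ to twice differentiable functions. A direct check shows that $F(x,y):=\varphi(x+y)+\psi(x-y)$ satisfies $F'_x=f'_x$ and $F'_y=f'_y$ everywhere, so $f-F$ has both partial derivatives identically zero and is therefore constant on $\mathbb R^2$; absorbing this constant into $\varphi$ completes the representation.

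The main obstacle is the use of the positive answer to Question \ref{q1}: the auxiliary functions $g$ and $h$ are merely separately differentiable, with no assumption of joint continuity and no control on the continuity of their partial derivatives, so Baire's original theorem does not apply. Once the refinement proved in \cite{MM} is available, every remaining step is a routine application of the mean value theorem and the fundamental theorem of calculus.
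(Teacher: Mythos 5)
Your proposal is correct and follows exactly the route the paper itself indicates for this theorem (which it quotes from \cite{MM} with the remark that the positive answer to Question~\ref{q1} ``implies the following result''): factor $\partial^2_{xx}-\partial^2_{yy}$, apply the separately-differentiable refinement of Baire's theorem from \cite{MM} to $f'_x\pm f'_y$, and integrate. The only external input, correctly identified in your last paragraph, is that the first-order result must hold without joint continuity, which is precisely the content of the affirmative answer to Question~\ref{q1} in \cite{MM}.
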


In connection with this the following question on solutions of the equation of string oscillation
$$
\frac{\partial^2 u}{\partial x^2} = \frac{\partial^2 u}{\partial
y^2}, \eqno(3)
$$
arises naturally.

\begin{question}\label{q2}\cite[Question 6.3]{MM}
Let a function $f:\mathbb R^2\to\mathbb
R$ has partial derivatives $f''_{xx}$ and $f''_{yy}$ and
$$
f''_{xx}(p)=f''_{yy}(p)
$$
for every $p\in \mathbb R^2$. Do there exist twice differentiable
functions $\varphi, \psi:\mathbb R\to\mathbb R$ such that
$$
f(x,y)=\varphi(x+y) + \psi(x-y)
$$
for every $x,y\in\mathbb R$?
\end{question}

In this paper we develop methods from \cite{Baire, MM}, and introduce and investigate auxiliary functions $\lambda_1^{(2)}$ and $\lambda_2^{(2)}$, generated by a twice differentiable function. Using these functions we establish some properties of separately twice differentiable functions and in Theorem~\ref{t7} shall give a positive answer to Question \ref{q2}.

\section{Auxiliary functions  $\lambda_2^{(2)}$ and $\lambda_1^{(2)}$}

 Firstly we introduce an auxiliary function $\lambda_2^{(2)}$ which is generated by the second order divided difference, and investigate its properties.

Let $f:\mathbb R\to \mathbb R$ be a function. Denote
$$
r_f^{(2)}(p,s)=r_f^{(2)}(x_1,x_2,s)=\frac{f(x_1)-f(x_1+s)-f(x_2-s)+f(x_2)}{s(x_2-x_1-s)}
$$
for every $p=(x_1,x_2)\in\mathbb R$, $x_1<x_2$, and $s\in (0,x_2-x_1)$.

It is easy to see that
$r_f^{(2)}(x_1,x_2,s)=r_f^{(2)}(x_1,x_2,x_2-x_1-s)$. Thus it is sufficient to study the properties of the function $r_f^{(2)}(x_1,x_2,s)$ for $s\in (0,\frac{x_2-x_1}{2}]$.

Note that $r_f^{(2)}(x,y,s)=2a$ for any function $f(x)=ax^2+bx+c$ and every admissible $x,y,s\in \mathbb R$. Besides, $r_{u+v}^{(2)}=r_u^{(2)}+
r_v^{(2)}$ for every functions $u,v:\mathbb R\to\mathbb R$.

For every $\varepsilon>0$ and $x\in\mathbb R$ denote by
$\Delta_2^{(2)}(\varepsilon, f, x)$ the set of all
$\delta\in(0,1]$ such that
$$
|r_f^{(2)}(p',s')-r_f^{(2)}(p'',s'')|\leq\varepsilon
$$
for every $p'=(x_1',x_2'),
p''=(x_1'',x_2'')\in (x-\delta, x)\times(x,x+\delta)$ and
$s'\in(0,x_2'-x_1')$, $s''\in(0,x_2''-x_1'')$.

The function $\lambda_2^{(2)}(\varepsilon,f):\mathbb R\to \mathbb R$
is defined by

$$
\lambda_2^{(2)}(\varepsilon,f)(x) = \left \{\begin{array}{rr}
 {\rm sup}\, \Delta_2^{(2)}(\varepsilon, f, x),
&
 {\rm if}\quad \Delta_2^{(2)}(\varepsilon, f, x)\ne \O;
\\
  0,
&
 {\rm if}\quad \Delta_2^{(2)}(\varepsilon, f, x)=\O.
  \end{array} \right .
$$

\begin{proposition}\label{p1}
Let $f:\mathbb R\to\mathbb R$ be a function twice differentiable at a point $x_0\in \mathbb R$ and
$\varepsilon>0$. Then $\lambda_2^{(2)}(\varepsilon,f)(x_0)>0$.
\end{proposition}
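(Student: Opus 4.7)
The plan is to reduce the problem to estimating $r_f^{(2)}$ for a function $g$ whose second‐order Taylor data at $x_0$ vanishes, and then to use the mean value theorem to rewrite the four‐term combination in the numerator as a difference of values of $g'$.

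First, since $r_f^{(2)}$ is additive in $f$ and since the remark preceding the proposition gives $r_P^{(2)} \equiv f''(x_0)$ for the quadratic Taylor polynomial
$$P(x) = f(x_0) + f'(x_0)(x-x_0) + \tfrac12 f''(x_0)(x-x_0)^2,$$
the function $g := f - P$ satisfies $r_f^{(2)} = f''(x_0) + r_g^{(2)}$. Twice differentiability of $f$ at $x_0$ is understood in the classical sense, meaning that $f$ (and hence $g$) is differentiable in some neighborhood $(x_0-\rho,x_0+\rho)$ of $x_0$, with $g'(x_0)=0$ and $\lim_{x\to x_0} g'(x)/(x-x_0) = g''(x_0) = 0$. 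So for any prescribed $\eta>0$ there is $\delta\in(0,\min\{\rho,1\}]$ with $|g'(x)|\le \eta |x-x_0|$ for $|x-x_0|<\delta$.

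Next, I would represent the numerator of $r_g^{(2)}$ via the auxiliary function $\phi(t):=g(x_1+t)-g(x_2-s+t)$, whose values at $t=0$ and $t=s$ differ by exactly the negative of the numerator of $r_g^{(2)}(x_1,x_2,s)$. The mean value theorem gives points $\xi_1 \in (x_1,x_1+s)$ and $\xi_2\in(x_2-s,x_2)$ with
$$g(x_1)-g(x_1+s)-g(x_2-s)+g(x_2) = s\bigl(g'(\xi_2)-g'(\xi_1)\bigr).$$
For $(x_1,x_2)\in(x_0-\delta,x_0)\times(x_0,x_0+\delta)$ both $\xi_1,\xi_2$ lie in $[x_1,x_2]$ and hence within distance $L:=x_2-x_1$ of $x_0$, yielding $|g'(\xi_j)|\le \eta L$. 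To deal with the denominator $s(L-s)$ vanishing at the endpoints, I invoke the symmetry $r_f^{(2)}(x_1,x_2,s)=r_f^{(2)}(x_1,x_2,L-s)$ already noted by the authors, so it suffices to treat $0<s\le L/2$, on which $L-s\ge L/2$ and consequently
$$|r_g^{(2)}(x_1,x_2,s)| \le \frac{s\cdot 2\eta L}{s\cdot L/2} = 4\eta.$$

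Applying this bound to both $(p',s')$ and $(p'',s'')$ gives $|r_f^{(2)}(p',s')-r_f^{(2)}(p'',s'')|\le 8\eta$, so the choice $\eta=\varepsilon/8$ places the corresponding $\delta$ into $\Delta_2^{(2)}(\varepsilon,f,x_0)$, which forces $\lambda_2^{(2)}(\varepsilon,f)(x_0)\ge\delta>0$. The main subtlety I anticipate is precisely the denominator issue: naive bounds of $|g'(\xi_j)|$ in terms of $\delta$ rather than $L$ blow up when $s$ approaches $0$ or $L$, so the combination of localizing $\xi_1,\xi_2$ in $[x_1,x_2]$ (to get a factor of $L$ in the numerator) with the symmetry reduction $s\le L/2$ (to guarantee a factor $L/2$ in the denominator) is the crucial step.
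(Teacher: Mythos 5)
Your proof is correct and follows essentially the same route as the paper's: subtract the second-order Taylor polynomial, use $g'(x)=o(|x-x_0|)$, rewrite the numerator via the mean value theorem as $s(g'(\xi_2)-g'(\xi_1))$ with $\xi_1,\xi_2\in[x_1,x_2]$, and exploit the symmetry $s\mapsto (x_2-x_1)-s$ to keep the denominator bounded below by $s(x_2-x_1)/2$. The only cosmetic difference is your single application of the mean value theorem to $\phi(t)=g(x_1+t)-g(x_2-s+t)$ in place of the paper's two separate applications, and the resulting constants ($\eta=\varepsilon/8$, bound $\varepsilon/2$ per term) match the paper's exactly.
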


\begin{proof} Note that it is sufficient to consider the case when $x_0=0$.

Consider the function $g(x)=f(x)-f(0)-f'(0)x-\frac{1}{2}f''(0)x^2$.
Since $g'(0)=g''(0)=0$, there exists $\delta\in(0,1]$ such that
$|\frac{g'(\tau)}{\tau}|<\frac{\varepsilon}{8}$ for every
$0<|\tau|<\delta$. Take any $x_1\in (-\delta,0)$, $x_2\in
(0,\delta)$ and $s\in (0,\frac{x_2-x_1}{2}]$. Note that
$t=(x_2-x_1)-s\geq \frac{x_2-x_1}{2}$. Taking into account that $x_1<0<x_2$, we obtain that
$|x|\leq x_2-x_1$ for every $x\in (x_1,x_2)$. Therefore $|\frac{x}{t}|\leq 2$. Now we have
$$
\begin{aligned}
|r_g^{(2)}(x_1,x_2,s)|&=\left|\frac{g(x_1)-g(x_1+s)-g(x_2-s)+g(x_2)}{s(x_2-x_1-s)}\right|=\\
&=\left|\frac{g'(x_1+\theta_1s)-g'(x_2-\theta_2s)}{t}\right|\leq\\
&\leq\left|\frac{x_1+\theta_1s}{t}\right|\cdot\left|\frac{g'(x_1+\theta_1s)}{x_1+\theta_1s}\right|+
\left|\frac{x_2-\theta_2s}{t}\right|\cdot\left|\frac{g'(x_2-\theta_2s)}{x_2-\theta_2s}\right|\leq\\
&\leq 2\frac{\varepsilon}{8}+
2\frac{\varepsilon}{8}=\frac{\varepsilon}{2},
\end{aligned}
$$
where $\theta_1, \theta_2 \in (0,1)$ are chosen by Lagrange Theorem. Therefore for every $p'=(x_1',x_2'),
p''=(x_1'',x_2'')\in (-\delta, 0)\times(0,\delta)$ and
$s'\in(0,x_2'-x_1')$, $s''\in(0,x_2''-x_1'')$ we have

$$
|r_f^{(2)}(p',s')-r_f^{(2)}(p'',s'')|=
|r_g^{(2)}(p',s')-r_g^{(2)}(p'',s'')|\leq$$

$$|r_g^{(2)}(p',s')|+|r_g^{(2)}(p'',s'')|\leq\frac{\varepsilon}{2}+
\frac{\varepsilon}{2}= \varepsilon.$$

Thus $\delta\in\Delta_2^{(2)}(\varepsilon, f, 0)$ and
$\lambda^{(2)}(\varepsilon,f)(0)\geq\delta>0$.
\end{proof}

\begin{remark}\label{r1}
Note that
$\lim\limits_{s\to+0}r_f^{(2)}(x_1,x_2,s)=\frac{f'(x_2)-f'(x_1)}{x_2-x_1}$ for every function $f$ differentiable at points $x_1$ and $x_2$. For such function $f$ put $r_f^{(2)}(x_1,x_2,0)=r_f^{(2)}(x_1,x_2,x_2-x_1)=\frac{f'(x_2)-f'(x_1)}{x_2-x_1}$.
Besides, $f''(x_0)=\mathop{\lim\limits_{x_1\to x_0-0}}\limits_{x_2\to x_0+0}r_f^{(2)}(x_1,x_2,s)$ for a  function $f$ twice differentiable at point $x_0$. Therefore it is natural to put $r_f^{(2)}(x_0,x_0,0)=f''(x_0)$. This implies that for a function $f$ which is differentiable on $\mathbb R$ and twice differentiable at $x_0\in\mathbb R$ in the definition of the set $\Delta_2^{(2)}(\varepsilon, f, x_0)$ we can put
$p'=(x_1',x_2'),p''=(x_1'',x_2'')\in(x-\delta, x]\times[x,x+\delta)$, $s'\in[0,x_2'-x_1']$ and  $s''\in[0,x_2''-x_1'']$.
\end{remark}

The following proposition gives joint properties of $\lambda_2^{(2)}$.

\begin{proposition}\label{p2}
Let $Y$ be a topological space, $f:\mathbb R\times Y\to\mathbb R$ be a function continuous with respect to $y$ and
$\varepsilon>0$. Then the function $g:\mathbb R\times Y\to\mathbb R$, $g(x,y)=\lambda_2^{(2)}(\varepsilon,f_y)(x)$, is jointly upper semicontinuous.
\end{proposition}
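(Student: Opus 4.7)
The plan is to show that the sublevel set $\{(x,y):g(x,y)<c\}$ is open in $\mathbb R\times Y$ for every $c\in\mathbb R$. Since $g$ takes values in $[0,1]$, the cases $c\le 0$ and $c>1$ are trivial, so only $c\in(0,1]$ needs work. The key structural observation is that, for any one-variable function $h$, the set $\Delta_2^{(2)}(\varepsilon,h,x)\subset(0,1]$ is a down-set: if $0<\delta'<\delta$ then the rectangle $(x-\delta',x)\times(x,x+\delta')$ is contained in $(x-\delta,x)\times(x,x+\delta)$, so the defining inequality at $\delta$ implies it at $\delta'$. Consequently $g(x,y)<c$ is equivalent to the existence of some $\delta_1\in(0,c)$ with $\delta_1\notin\Delta_2^{(2)}(\varepsilon,f_y,x)$.

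Fixing $(x_0,y_0)$ with $g(x_0,y_0)<c$, I would pick $\delta_1\in(g(x_0,y_0),c)\subset(0,1]$ and use the failure $\delta_1\notin\Delta_2^{(2)}(\varepsilon,f_{y_0},x_0)$ to extract a concrete witness: points $p'=(x_1',x_2'),\,p''=(x_1'',x_2'')\in(x_0-\delta_1,x_0)\times(x_0,x_0+\delta_1)$ and parameters $s'\in(0,x_2'-x_1')$, $s''\in(0,x_2''-x_1'')$ with
$$\bigl|r_{f_{y_0}}^{(2)}(p',s')-r_{f_{y_0}}^{(2)}(p'',s'')\bigr|>\varepsilon.$$
Then I would freeze this quadruple $(p',p'',s',s'')$ and build a neighborhood of $(x_0,y_0)$ on which the very same quadruple still certifies $\delta_1\notin\Delta_2^{(2)}(\varepsilon,f_y,x)$.

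Two independent observations suffice. First, since $x_1',x_1''$ and $x_2',x_2''$ lie strictly inside the open intervals $(x_0-\delta_1,x_0)$ and $(x_0,x_0+\delta_1)$ respectively, one picks $\eta>0$ so small that $x_1',x_1''\in(x-\delta_1,x)$ and $x_2',x_2''\in(x,x+\delta_1)$ whenever $|x-x_0|<\eta$; the parameters $s',s''$ are unaffected by $x$. Second, each of $r_{f_y}^{(2)}(p',s')$ and $r_{f_y}^{(2)}(p'',s'')$ is a finite linear combination, with $y$-independent coefficients, of values of $f$ at eight fixed abscissas $x_1',\,x_1'+s',\,x_2'-s',\,x_2',\,x_1'',\,x_1''+s'',\,x_2''-s'',\,x_2''$; by continuity of $f$ in $y$ their difference is continuous in $y$, so the strict inequality is preserved on an open neighborhood $V\ni y_0$ in $Y$.

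On the product neighborhood $(x_0-\eta,x_0+\eta)\times V$ the frozen quadruple exhibits the failure of $\delta_1\in\Delta_2^{(2)}(\varepsilon,f_y,x)$, and by the down-set property this forces $g(x,y)\le\delta_1<c$, proving openness and hence upper semicontinuity. The only real obstacle is the bookkeeping: the defining rectangle for $\Delta_2^{(2)}$ moves with $x$, and one must guarantee that a single witness survives that motion \emph{and} the perturbation in $y$; the one-variable continuity hypothesis on $f$ is enough precisely because the eight evaluation abscissas are held fixed once and for all.
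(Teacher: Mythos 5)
Your proof is correct and follows essentially the same route as the paper's: extract a witness quadruple $(p',p'',s',s'')$ from the failure of $\delta_1\in\Delta_2^{(2)}(\varepsilon,f_{y_0},x_0)$, preserve the strict inequality $|r^{(2)}_{f_y}(p',s')-r^{(2)}_{f_y}(p'',s'')|>\varepsilon$ on a neighborhood of $y_0$ by continuity in $y$, and shrink the $x$-neighborhood so the frozen witness stays inside the rectangle $(x-\delta_1,x)\times(x,x+\delta_1)$ as $x$ moves. The only differences are cosmetic: you phrase upper semicontinuity via open sublevel sets and make the down-set property of $\Delta_2^{(2)}$ explicit, where the paper works with $\gamma+\eta$ directly and splits $\eta$ into thirds.
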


\begin{proof} Let $x_0\in \mathbb R$, $y_0\in Y$,
$\gamma=g(x_0,y_0)$ and $\eta >0$. If $\gamma + \eta > 1$, then
$g(x,y)\leq 1<\gamma + \eta$ for every $(x,y)\in \mathbb
R\times Y$.

Consider the case when $\gamma + \eta \leq 1$. Then $\delta_0 =
\gamma + \frac{\eta}{3}\leq 1$. Since ${\rm sup}\,
\Delta_2^{(2)}(\varepsilon, f_{y_0},
x_0)=g(x_0,y_0)=\gamma<\delta_0$, $\delta_0\not\in
\Delta_2^{(2)}(\varepsilon, f_{y_0}, x_0)$, i.e. there exist
$p'=(x_1',x_2'), p''=(x_1'',x_2'')\in (x_0-\delta_0,
x_0)\times(x_0,x_0+\delta_0)$ and $s'\in(0,x_2'-x_1')$,
$s''\in(0,x_2''-x_1'')$ such that
$$
|r_{f_{y_0}}^{(2)}(p',s')-r_{f_{y_0}}^{(2)}(p'',s'')|>\varepsilon.
$$
Since $f$ is continuous with respect to $y$, the function $\varphi(y)=r_{f_{y}}^{(2)}(p',s')-
r_{f_{y}}^{(2)}(p'',s'')$ is continuous. Therefore there exists a neighborhood $V$ of
$y_0$ in $Y$ such that
$$
|r_{f_{y}}^{(2)}(p',s')-r_{f_{y}}^{(2)}(p'',s'')|>\varepsilon
$$
for every $y\in V$. Put
$\rho=\min\{x_0-x'_1,x_0-x''_1,x'_2-x_0,x''_2-x_0,\frac{\eta}{3}\}>0$,
$U=(x_0-\rho,x_0+\rho)$ and $\delta_1=\gamma+\frac{2\eta}{3}$. Now we have
$x'_1,x''_1\in (x-\delta_1,x)$ and $x'_2,x''_2\in (x,x+\delta_1)$
for every $x\in U$. Therefore $\delta_1\not\in
\Delta_2^{(2)}(\varepsilon, f_{y}, x)$ for every $x\in U$ and
$y\in V$. Thus $\Delta_2^{(2)}(\varepsilon, f_{y}, x)\subseteq
(0,\delta_1)$ and $g(x,y)\leq \delta_1 < \gamma+\eta$ for every
$x\in U$ and $y\in V$.

Hence $g$ is upper semicontinuous at $(x_0,y_0)$.\hfill$\Box$

Now we introduce a function $\lambda_1^{(2)}$ which gives for a twice differentiable function a possibility to
estimate the exactness of approximation of derivative by the first order divided difference.

Let $f:\mathbb R\to \mathbb R$ be a function. Denote
$$
r_f(p)=r_f(x_1,x_2)=\frac{f(x_1)-f(x_2)}{x_2-x_1}
$$
for every $p=(x_1,x_2)\in\mathbb R^2$, $x_1\ne x_2$.
For every $\varepsilon>0$ and $x\in\mathbb R$ denote by
$\Delta^{(2)}_1(\varepsilon, f, x)$ the set of all
$\delta\in(0,1]$ such that
$$
|r_f(p')-r_f(p'')|\leq\varepsilon \cdot{\rm
max}\{|x_2'-x_1'|,|x_2''-x_1''|\}
$$
for every $p'=(x_1',x_2'),
p''=(x_1'',x_2'')\in (x-\delta, x)\times(x,x+\delta)$ with
$x_1'+x_2'=x_1''+x_2''$.

The function $\lambda_2^{(1)}(\varepsilon,f):\mathbb R\to \mathbb R$
is defined by

$$
\lambda_1^{(2)}(\varepsilon,f)(x) = \left \{\begin{array}{rr}
 {\rm sup}\, \Delta_1^{(2)}(\varepsilon, f, x),
&
 {\rm if}\quad \Delta_1^{(2)}(\varepsilon, f, x)\ne \O;
\\
  0,
&
 {\rm if}\quad \Delta_1^{(2)}(\varepsilon, f, x)=\O.
  \end{array} \right .
$$

\begin{proposition}\label{p3}
Let $f:\mathbb R\to\mathbb R$ be a twice differentiable at $x_0\in \mathbb R$ function and
$\varepsilon>0$. Then $\lambda^{(2)}_1(\varepsilon,f)(x_0)>0$.
\end{proposition}

\begin{proof} Consider the function
$g(x)=f(x)-f(x_0)-f'(x_0)(x-x_0)-\frac{1}{2}f''(x_0)(x-x_0)^2$.
Since $g'(x_0)=g''(x_0)=0$, $g(x)=o((x-x_0)^2)$. Hence there exists
$\delta\in(0,1]$ such that $|g(x_0+t)|<\frac{\varepsilon}{2}\,t^2$
for every $0<|t|<\delta$. Note that
$r_f(x_1,x_2)=f'(x_0)+\frac{1}{2}f''(x_0)(x_2+x_1-2x_0)+r_g(x_1,x_2)$
for arbitrary distinct $x_1,x_2\in\mathbb R$. Now for every
$p'=(x_1',x_2'), p''=(x_1'',x_2'')\in (x_0-\delta,
x_0)\times(x_0,x_0+\delta)$ with $x_1'+x_2'=x_1''+x_2''$ we have
$$
|r_f(p')-r_f(p'')|= |r_g(p')-r_g(p'')|\leq
\frac{|g(x_2')|}{x_2'-x_0} +\frac{|g(x_1'|}{x_0-x_1'}+
\frac{|g(x_2'')|}{x_2''-x_0}+\frac{|g(x_1'|}{x_0-x_1'}
\leq
$$
$$
\frac{\varepsilon}{2}
((x_2'-x_0)+(x_0-x_1')+(x_2'-x_0)+(x_0-x_1'))\leq  \varepsilon
\cdot{\rm max}\{|x_2'-x_1'|,|x_2''-x_1''|\} .
$$
Thus $\delta\in\Delta^{(2)}_1(\varepsilon, f, x_0)$ and
$\lambda^{(2)}(\varepsilon,f)(x_0)\geq\delta>0$.\end{proof}

\begin{proposition}\label{p4}
Let $f:\mathbb R\to\mathbb R$ be a differentiable at $x_0\in \mathbb R$ function,
$\varepsilon>0$ and $0<|s|<\lambda^{(2)}_1(\varepsilon,f)(x_0)$.
Then $|\frac{f(x_0+s)-f(x_0-s)}{2s}-f'(x_0)|\leq 2\,\varepsilon
s$.
\end{proposition}

\begin{proof} Note that it is sufficient to prove this proposition for $s>0$. Let $0<t<s$. Since there exists $\delta\in
\Delta^{(2)}_1(\varepsilon, f, x_0)$ with $\delta>s$,
$$
|r_f(x_0-s,x_0+s)-r_f(x_0-t,x_0+t)|\leq2\,\varepsilon s.
$$
It remains to take into account that $\lim\limits_{t\to 0}r_f(x_0-t,x_0+t)=f'(x_0)$.\end{proof}

The following proposition can be proved by analogy with Proposition \ref{p2}.

\begin{proposition}\label{p5}
Let $Y$ be a topological space,
$f:\mathbb R\times Y\to\mathbb R$ be a continuous with respect to $y$ function and $\varepsilon>0$. Then the function $g:\mathbb R\times
Y\to\mathbb R$, $g(x,y)=\lambda_1^{(2)}(\varepsilon,f_y)(x)$,
is jointly upper semicontinuous.
\end{proposition}

\begin{proof} Let $x_0\in \mathbb R$, $y_0\in Y$,
$\gamma=g(x_0,y_0)$ and $\eta >0$. If $\gamma + \eta > 1$, then
$g(x,y)\leq 1<\gamma + \eta$ for every $(x,y)\in \mathbb
R\times Y$.

Consider the case when $\gamma + \eta \leq 1$. Òîä³ $\delta_0 =
\gamma + \frac{\eta}{3}\leq 1$. Since ${\rm sup}\,
\Delta_1^{(2)}(\varepsilon, f_{y_0},
x_0)=g(x_0,y_0)=\gamma<\delta_0$, $\delta_0\not\in
\Delta_1^{(2)}(\varepsilon, f_{y_0}, x_0)$, i.e. there exist
$p'=(x_1',x_2'), p''=(x_1'',x_2'')\in (x_0-\delta_0,
x_0)\times(x_0,x_0+\delta_0)$ such that $x_1'+x_2'=x_1''+x_2''$ and
$$
|r_{f_{y_0}}(p')-r_{f_{y_0}}(p'')|>\varepsilon\cdot{\rm
max}\{|x_2'-x_1'|,|x_2''-x_1''|\} .
$$
Since $f$ is continuous with respect to $y$, the function $\varphi(y)=r_{f_{y}}(p')- r_{f_{y}}(p'')$ is continuous.
Therefore there exists a neighborhood $V$ of $y_0$ in $Y$ such that
$$
|r_{f_{y}}(p')-r_{f_{y}}(p'')|>\varepsilon\cdot{\rm
max}\{|x_2'-x_1'|,|x_2''-x_1''|\}
$$
for every $y\in V$. Put
$\rho=\min\{x_0-x'_1,x_0-x''_1,x'_2-x_0,x''_2-x_0,\frac{\eta}{3}\}>0$,
$U=(x_0-\rho,x_0+\rho)$ ³ $\delta_1=\gamma+\frac{2\eta}{3}$. Then
$x'_1,x''_1\in (x-\delta_1,x)$ and $x'_2,x''_2\in (x,x+\delta_1)$
for every $x\in U$. Thus $\delta_1\not\in
\Delta_1^{(2)}(\varepsilon, f_{y}, x)$ for every $x\in U$ and
$y\in V$. Hence $\Delta_1^{(2)}(\varepsilon, f_{y}, x)\subseteq
(0,\delta_1)$ and $g(x,y)\leq \delta_1 < \gamma+\eta$ for every
$x\in U$ and $y\in V$.

Thus $g$ is jointly upper semicontinuous at $(x_0,y_0)$.\end{proof}

\section{Separately twice differentiable pointwise
changeable functions}

In this section we give an answer to Question \ref{q2} in the case of continuous partial derivatives $f'_x$ and $f'_y$.

\begin{proposition}\label{p6}
Let $f:\mathbb R\to\mathbb R$ be a twice differentiable at $x_0$ function, $\varepsilon>0$  and
$\delta={\rm min}\{ \lambda^{(2)}_1(\varepsilon, f)(x_0),
\lambda^{(2)}_2(\varepsilon, f)(x_0)\}$. Then
$$
|f'(x_0)-\frac{f(x)-f(x_0)}{x-x_0}+\frac{1}{2}f''(x_0)(x-x_0)|\leq
\frac{5}{2}\varepsilon |x-x_0|
$$
for every
$x\in (x_0-\delta, x_0)\cup(x_0,x_0+\delta)$.
\end{proposition}

\begin{proof} Note that it is sufficient to consider the case of $x\in
(x_0,x_0+\delta)$.

Put $t=x-x_0$. Since $0<t\leq
\lambda^{(2)}_2(\varepsilon, f)(x_0)$, taking into account Remark \ref{r1} we have
$$
|f''(x_0)-r^{(2)}_f(x_0-t,x_0+t,t)|\leq\varepsilon.
$$
Besides, note that
$$
r^{(2)}_f(x_0-t,x_0+t,t)=2\frac{f(x_0+t)-f(x_0)}{t^2} -
\frac{f(x_0+t)-f(x_0-t)}{t^2}.
$$
Thus,
$$
|f''(x_0)-2\frac{f(x_0+t)-f(x_0)}{t^2} +
\frac{f(x_0+t)-f(x_0-t)}{t^2}|\leq\varepsilon.
$$

On the other hand, the inequality $0<t\leq \lambda^{(2)}_1(\varepsilon,
f)(x_0)$ and Proposition \ref{p4} imply
$$
|\frac{f(x_0+t)-f(x_0-t)}{t^2} -2 \frac{f'(x_0)}{t}|\leq 4
\,\varepsilon .
$$

Now we have

$$
|f'(x_0)-\frac{f(x)-f(x_0)}{x-x_0}+\frac{1}{2}f''(x_0)(x-x_0)|=\frac{t}{2}\,
|f''(x_0)-2\frac{f(x_0+t)-f(x_0)}{t^2}+
$$
$$
2\frac{f'(x_0)}{t}|\leq \frac{t}{2} (
|f''(x_0)-2\frac{f(x_0+t)-f(x_0)}{t^2}+\frac{f(x_0+t)-
f(x_0-t)}{t^2}| +
$$
$$
|2 \frac{f'(x_0)}{t}- \frac{f(x_0+t)-f(x_0-t)}{t^2}|)\leq
\frac{t}{2}(\varepsilon+4\,\varepsilon)= \frac{5}{2}\,\varepsilon
|x-x_0|.
$$\end{proof}

Let $(X, |\cdot - \cdot|_X)$ and $(Y, |\cdot - \cdot|_Y)$ be
metric spaces. A mapping $f:X\to Y$ {\it satisfies Lipschitz
condition with a constant $C>0$} if $|f(x) - f(y)|_Y\leq C
|x-y|_X$ for any $x,y\in X$.  A mapping
$f:X\to Y$ is called {\it pointwise changeable} if for every
$\varepsilon>0$ the union $G_{\varepsilon}$ of  all open nonempty sets $G\subseteq X$ such
that $f|_G$ satisfies the Lipschitz condition with the constant
$\varepsilon$ is an everywhere dense set.

\begin{theorem}\label{t2}
Let $f:\mathbb R^2\to\mathbb R$ be a separately twice differentiable function such that
$f''_{xx}(p)=f''_{yy}(p)$ for every $p\in\mathbb R^2$. Then
the function $g:\mathbb R^2\to\mathbb R$, $g=f'_x-f'_y$, is pointwise changeable on every nonempty subset $E$ of arbitrary line $y=x+c$, and the function $h:\mathbb R^2\to\mathbb R$, $h=f'_x+f'_y$, is pointwise changeable on every nonempty subset $E$ of arbitrary line $y=-x+c$.
\end{theorem}

\begin{proof} Note that it ia sufficient to consider the case of closed set $E$ and $c=0$.

Let $\varepsilon>0$  and $E\subseteq \{(x,x):x\in\mathbb R\}$ be a closed set. According to Proposition \ref{p1}, Proposition \ref{p2}, Proposition \ref{p3} and Proposition \ref{p5} the functions $\lambda^{(2)}_1(\varepsilon,f^x)(y)$,
$\lambda^{(2)}_2(\varepsilon,f^x)(y)$,
$\lambda^{(2)}_1(\varepsilon,f_y)(x)$ and
$\lambda^{(2)}_2(\varepsilon,f_y)(x)$ are strictly positive and jointly upper semicontinuous. Therefore there exist $\delta>0$ and an open in $E$ set $G$ such that $|u-v|<\delta$
for every $(u,u), (v,v)\in G$ and the functions
$\lambda^{(2)}_1(\varepsilon,f^x)(y)$,
$\lambda^{(2)}_2(\varepsilon,f^x)(y)$,
$\lambda^{(2)}_1(\varepsilon,f_y)(x)$,
$\lambda^{(2)}_2(\varepsilon,f_y)(x)$ are $\ge\delta$ on $G$.

Let $(u,u), (v,v)\in G$, $u<v$ and $s=v-u$. According to Proposition \ref{p6}, there exist $\alpha_1, \alpha_2, \alpha_3, \alpha_4\in
(-\frac{5}{2}\varepsilon, \frac{5}{2}\varepsilon)$ such that
$$
f'_x(u,u)=\frac{f(v,u)-f(u,u)}{s}-\frac{1}{2}f''_{xx}(u,u)s+\alpha_1
s,
$$
$$
f'_y(u,u)=\frac{f(u,v)-f(u,u)}{s}-\frac{1}{2}f''_{yy}(u,u)s+\alpha_2
s,
$$
$$
f'_x(v,v)=\frac{f(u,v)-f(v,v)}{-s}-\frac{1}{2}f''_{xx}(v,v)(-s)+\alpha_3
s,
$$
$$
f'_y(v,v)=\frac{f(v,u)-f(v,v)}{-s}-\frac{1}{2}f''_{yy}(v,v)(-s)+\alpha_4
s.
$$
Then we have
$$
|g(u,u)-g(v,v)|=|f'_x(u,u)-f'_y(u,u)-f'_x(v,v)+f'_y(v,v)|=
$$
$$
|\alpha_1-\alpha_2-\alpha_3+\alpha_4|s< 10\,\varepsilon\,s.
$$

By analogy we can prove the pointwise changeability of the function $h=f_x'+f_y'$. \end{proof}

We need the following result from \cite{MM}.

\begin{proposition}\label{p7}\cite[Theorem 3.2]{MM}
Let $X\subseteq\mathbb R$ be a nonempty
interval, \mbox{$(Y,|\cdot - \cdot|_Y)$} be a metric space,
$f:X\to Y$ be a continuous pointwise changeable on every closed
set mapping. Then $f$ is a constant.
\end{proposition}

The following theorem will be proved using the idea of the proof of Theorem 4.3 from \cite{MM}.

\begin{theorem}\label{t3}
Let $f:\mathbb R^2\to\mathbb R$ be a separately twice differentiable function such that $f''_{xx}(p)=f''_{yy}(p)$
for every $p\in\mathbb R^2$, and assume that the function $g:\mathbb
R^2\to\mathbb R$, $g=f'_x-f'_y$, is jointly continuous. Then there exist continuously differentiable functions
$\varphi:\mathbb R \to \mathbb R$ and $\psi:\mathbb R \to \mathbb R$
such that $f(x,y)=\varphi(x-y)+\psi(x+y)$ for every
$x,y\in\mathbb R$.
\end{theorem}

\begin{proof} Theorem \ref{t2} and Proposition \ref{p7} imply that the function $g$ is a constant on every line $y=x+c$, i.e.
$g(x,y)=\alpha(x-y)$. The joint continuity of $g$ implies the continuity of the function $\alpha:\mathbb R\to\mathbb R$.

Choose a differentiable function $\varphi:\mathbb R\to\mathbb R$
such that $\varphi'(x)=\frac{1}{2}\alpha(x)$ for every
$x\in\mathbb R$ and put $h(x,y)=f(x,y)-\varphi(x-y)$. Then for every $x,y\in\mathbb R$ we have
$$\textstyle
h'_x(x,y)-h'_y(x,y)=f'_x(x,y)-\frac{1}{2}\alpha(x-y)-f'_y(x,y)-\frac{1}{2}\alpha(x-y)=
g(x,y)-\alpha(x-y)=0.
$$

Thus according to \cite[Corollary]{MM} there exists a continuous
$\psi:\mathbb R\to\mathbb R$ such that $h(x,y)=\psi(x+y)$, i.e. $f(x,y)=\varphi(x-y)+\psi(x+y)$ for every $x,y\in\mathbb R$.
Since $\psi(x)=f(x,0)-\varphi(x)$, $\psi$ is a continuously differentiable function.\end{proof}

Note that a similar result can be analogously proved under the assumption of continuity of the function $f'_x+f'_y$. It can be derived from Theorem \ref{t3} using the substitution of variables $x=u$, $y=-v$.

Note that in these results as in \cite[Theorem 4.1, Theorem 4.3, Corollary 4.4]{MM} the fact that $f(x,y)$
is defined on the hole plane $\mathbb R^2$ is inessential. The following result can be proved analogously with several formal changes.

\begin{theorem}\label{t4}
Let $P=\{(x,y)\in\mathbb R^2: a<x<b,\,
c<y<d\}$, $f:P\to\mathbb R$ be a function such that
$f''_{xx}(p)=f''_{yy}(p)$ for every $p\in P$, and either the function $g=f'_x-f'_y$, or the function $h=f'_x+f'_y$ is continuous on $P$.
Then there exist continuously differentiable functions $\varphi:(a,b) \to
\mathbb R$ and $\psi:(c,d) \to \mathbb R$ such that
$f(x,y)=\varphi(x-y)+\psi(x+y)$ for every $(x,y)\in P$.
\end{theorem}

\section{Properties of partial derivatives of separately twice differentiable functions}

In this section we establish some properties of partial derivatives of separately twice differentiable functions. The same property of separately differentiable functions and separately pointwise Lipschitz functions was obtained in \cite{Tol1, MM}.

\begin{theorem}\label{t5}
Let $X=Y=\mathbb R$ and $f:X\times
Y\to\mathbb R$ be a function which is twice differentiable with respect to the first variable $x$ and continuous with respect to the second variable $y$. Then for every nonempty set $E\subseteq X\times Y$ the restriction $f'_x|_E$ of the partial derivative $f_x$ on $E$ has a nowhere dense discontinuity points set.
\end{theorem}

\begin{proof} It is sufficient to consider the case of closed set $E$.

Let $G$ be a  nonempty open subset of $E$. We should find a nonempty open subset $W\subseteq G$ such that the restriction $f'_x|_W$ is continuous.

Note that according to Proposition \ref{p3} and Proposition \ref{p5} the function
$g(x,y)=\lambda^{(2)}_1(1,f_y)(x)$ is a strictly positive jointly upper semicontinuous function. In particular, the restriction
$g|_E$ is a strictly positive jointly upper semicontinuous function on a Baire space $E$. Therefore there exist $\delta>0$ and a  nonempty open subset $W\subseteq G$ such that $g(x,y)\geq
\delta$ for every $(x,y)\in W$.

Let us show that $f'_x|_E$ is continuous at every point
$(x_0,y_0)\in W$. Fix an $\varepsilon>0$. According to Baire theorem \cite{Baire} the separately continuous function $f$ has a dense $G_\delta$-set of joint continuity points on the set $\{(x,y_0):x\in X\}$. Therefore there exists $0<s<{\rm
min}\{\varepsilon, \delta\}$ such that $f$ is jointly continuous at the points $(x_0+s, y_0)$ and $(x_0-s, y_0)$.
Choose $0<\gamma<\frac{s}{2}$ such that
$$
\left|\frac{f(x_0+s,y_0)-f(x_0-s,y_0)}{2s}-\frac{f(x_2,y)-f(x_1,y)}{x_2-x_1}\right|<\varepsilon\eqno (4)
$$
\noindent for every $x_1,x_2\in X$
and $y\in Y$ with $|x_1-(x_0-s)|<\gamma$, $|x_2-(x_0+s)|<\gamma$ and
$|y-y_0|<\gamma$.

Let $(x',y')\in W$ with $|x'-x_0|<\gamma$ and $|y'-y_0|<\gamma$.
Since $s<\delta$ and $(x_0,y_0), (x',y')\in W$, according to the choice of $\delta$ and $W$ we have $s<
\lambda^{(2)}_1(1,f^{y_0})(x_0)$ ³
$s<\lambda^{(2)}_1(1,f^{y'})(x')$. Therefore the Proposition \ref{p4} implies
$$
\left|\frac{f(x_0+s,y_0)-f(x_0-s,y_0)}{2s} -
f'_x(x_0,y_0)\right|\leq 2s \eqno (5)
$$
\noindent and
$$
\left|\frac{f(x'+s,y')-f(x'-s,y')}{2s} - f'_x(x',y')\right|\leq
2s. \eqno (6)
$$
But $|(x'-s)-(x_0-s)|=|(x'+s)-(x_0+s)|=|x'-x_0|<\gamma$ and
$|y'-y_0|<\gamma$. Therefore according to (4)  we have
$$
\left|\frac{f(x_0+s,y_0)-f(x_0-s,y_0)}{2s} -
\frac{f(x'+s,y')-f(x'-s,y')}{2s}\right|\leq \varepsilon. \eqno (7)
$$
Taking to account (5), (6) and (7) we obtain
$$
|f'_x(x_0,y_0)-f'_x(x',y')|< \varepsilon + 4s < 5\varepsilon
$$
for every $(x',y')\in W$ with $|x'-x_0|<\gamma$ and
$|y'-y_0|<\gamma$. Thus $f'_x|_E$ is continuous at
$(x_0,y_0)$.\end{proof}

\section{Auxiliary propositions}

\begin{lemma}\label{l1}
Let $f:\mathbb R^2\to\mathbb R$ be a separately differentiable function, $\varphi:(a,b) \to \mathbb R$,
$\psi:(c,d) \to \mathbb R$  be continuously differentiable functions such that $f(x,y)=\varphi(x-y)+\psi(x+y)$ for $a<x-y<b$ and
$c<x+y<d$. Then there exists a continuously differentiable extension $\tilde{\psi}:[c,d] \to \mathbb R$ of $\psi$ such that
$f(x,y)=\varphi(x-y)+\tilde{\psi}(x+y)$,
$f'_x(x,y)=\varphi'(x-y)+\tilde{\psi}'(x+y)$ and
$f'_y(x,y)=-\varphi'(x-y)+\tilde{\psi}'(x+y)$ for $a<x-y<b$ and
$c\leq x+y\leq d$.
\end{lemma}

\begin{proof} Let $x_0, y_0 \in\mathbb R$ such that
$a<x_0-y_0<b$ and $x_0+y_0=d$. Then $f(x_0,y_0)=\lim\limits_{x\to
x_0-0}f(x,y_0)=\lim\limits_{x\to
x_0-0}(\varphi(x-y_0)+\psi(x+y_0))=\varphi(x_0-y_0)+\lim\limits_{t\to
d-0}\psi(t)$. Thus there exists $\tilde{\psi}(d)= \lim\limits_{t\to
d-0}\psi(t)$. Analogously there exists $\tilde{\psi}(c)= \lim\limits_{t\to
c+0}\psi(t)$. The extension
$\tilde{\psi}:[c,d] \to \mathbb R$ of $\psi$ is continuous and
$f(x,y)=\varphi(x-y)+\tilde{\psi}(x+y)$ for $a<x-y<b$ and $c\leq
x+y\leq d$.

Show that $\tilde{\psi}$ is continuously differentiable at $c$
and $d$. Let $x_0, y_0 \in\mathbb R$ such that $a<x_0-y_0<b$ and
$x_0+y_0=d$. Then $f'_x(x_0,y_0)=\varphi'(x_0-y_0)+
\lim\limits_{t\to d-0}\frac{\psi(t)-\tilde{\psi}(d)}{t-d}$. Thus
$\tilde{\psi}$ is differentiable at $d$ and
$f'_x(x_0,y_0)=\varphi'(x_0-y_0)+\tilde{\psi}'(d)$. The continuity of $f'_x$ with respect to $x$
implies the continuity of $\tilde{\psi}'$ at $d$.

The continuity of $\tilde{\psi}'$ at $c$
and the equalities $f'_x(x,y)=\varphi'(x-y)+\tilde{\psi}'(x+y)$ and
$f'_y(x,y)=-\varphi'(x-y)+\tilde{\psi}'(x+y)$ at the corresponding points $(x,y)$ can be obtained analogously.\end{proof}

\begin{lemma}\label{l2}
Let $r:(a,b)\to \mathbb R$ be a function such that
$r(w)\geq{\rm min}\{r(u),r(v)\}$ for $a<u<w<v<b$. Then there exists $c\in
[a,b]$ such that $r$ increases on the segment $(a,c)$ and decreases
on the segment $(c,b)$. Besides the set
$A=\{(u,v):u\in(a,b), \, v<r(u)\}$ is open if and only if
$r(u)\leq\lim\limits_{t\to u\pm 0}r(t)$ for every $u\in
(a,b)$.
\end{lemma}

\begin{proof} Let $a<u_1<v_1<b$ such that $r(u_1)< r(v_1)$.
Show that $r$ increases on the segment $(a,u_1]$.
Suppose that $a<u_2<v_2\leq u_1$ such that $r(u_2)> r(v_2)$.
Put $u=u_2$, $v=v_1$ and chose $w\in \{u_1, v_2\}$
such that $r(w)={\rm min}\{r(u_1), r(v_2)\}$. Now we have $r(w)<
{\rm min}\{r(u),r(v)\}$, which is impossible.

Now denote by $c$ the supremum of set of all $u\in
(a,b)$ such that $r$ increases on the segment $(a,u]$. If this set is empty then $c=a$.
It is clearly that $r$ increases on $(a,c)$. According to the chose of $c$, the function $r$ decreases on
$(c,b)$.

Prove the second part of Lemma. Fix $u\in (a,b)$ and suppose that $r(u)>\lim\limits_{t\to
u- 0}r(t)$. Put $v=\frac{1}{2}(r(u)+\lim\limits_{t\to u-
0}r(t))$. Then $(u,v)\in A$, but $A$ is not a neighborhood of $(u,v)$. Thus $A$ is not open.

Now let $v<r(u)$, i.e. $(u,v)\in A$. It follows from
$\lim\limits_{t\to u\pm 0}r(t) > v$ that $A$
is a neighborhood of $(u,v)$.\end{proof}

Note that for increasing function $r$ the above-mentioned condition means the left-continuity of $r$ and for
decreasing function $r$ it means the right-continuity of $r$.

\begin{lemma}\label{l3}
Let $r:(a,b)\to \mathbb R$ be a strictly increasing left-continuous function. Then there exist $u_0\in (a,b)$ and a strictly monotone sequence $(u_n)_{n=1}^{\infty}$ of points of the interval $(a,b)$ that converges to $u_0$ and satisfies the inequality $u_n-u_0\leq
r(u_n)-r(u_0)$ for every $n\in\mathbb N$.
\end{lemma}

\begin{proof} Suppose that it is impossible to choose such $u_0\in (a,b)$ and $(u_n)_{n=1}^{\infty}$. This means that for every $u\in(a,b)$ there exists $\delta_u>0$ such that $r(t)< t-u+r(u)$ for every
$t\in (u-\delta_u,u+\delta_u)\subseteq (a,b)$, $t\ne u$. Then $r(u)\leq\lim\limits_{t\to u+0}r(t)\leq
\lim\limits_{t\to u+0}(t-u+r(u))=r(u)$. Hence $r$
is a right-continuous function at every point $u\in (a,b)$. Thus $r$
is a continuous function.

Fix $u_1\in (a,b)$. Put
$u_2=u_1+\frac{1}{2}\delta_{u_1}$. According to the choice of $\delta_{u_1}$
we have $r(u_2)<u_2-u_1+r(u_1)$. Consider the set
$A=\{t\in[u_1,b): r(t)=t-u_2+r(u_2)\}$. Note that $A$
is a closed set, $u_1\not\in A$ and $u_2\in A$. Then $u_3=\inf A \in A$ and
$u_1<u_3\leq u_2$. The continuity of $r$ implies that
$r(t)>t-u_2+r(u_2)=t-u_3+r(u_3)$ for all $t\in [u_1,u_3)$. But it contradicts to the choice of $\delta_{u_3}$.\end{proof}

\begin{lemma}\label{l4}
Let $r:(a,b)\to (v_0,+\infty)$ be a strictly monotone function such that for the numbers
$v_1=\inf\limits_{u\in(a,b)}r(u)$, $v_2=\sup\limits_{u\in(a,b)}r(u)$ and a number $v_0<v_1$ the
the set
$A=\{(u,v):a<u<b,\,v_0<v<r(u)\}$ is open. Assume that $f:\mathbb R^2\to\mathbb R$ is a function such that for every rectangle
$P=(\alpha,\beta)\times (c,d)\subseteq A$ there exist continuously differentiable functions $\varphi_P:(\alpha,\beta) \to \mathbb R$ and $\psi_P:(c,d) \to \mathbb R$ such that
$f(u,v)=\varphi_P(u)+\psi_P(v)$ for every $(u,v)\in P$. Then
there exist continuously differentiable functions $\varphi:(a,b) \to
\mathbb R$ and $\psi:(v_0,v_2) \to \mathbb R$ such that
$f(u,v)=\varphi(u)+\psi(v)$ for all $(u,v)\in A$.
\end{lemma}

\begin{proof} Put $P_0=(a,b)\times(v_0,v_1)$ and
$\varphi=\varphi_{P_0}$. We shall show that for every rectangle
$P=(\alpha,\beta)\times (c,d)\subseteq A$ the function $\varphi_P-\varphi$
is constant on $(\alpha,\beta)$.

Firstly consider the case of $P=(\alpha,\beta)\times (v_0,v_1)$.
We have $\varphi(u)+\psi_{P_0}(v)=\varphi_P(u)+\psi_{P}(v)$, i.e.
$\varphi(u)-\varphi_P(u)=\psi_{P}(v)-\psi_{P_0}(v)$ for $(u,v)\in
P$. Therefore the functions $\varphi_P-\varphi$ and $\psi_{P}-\psi_{P_0}$ are constant on $(\alpha,\beta)$ and $(v_0,v_1)$ respectively.

Now let $P=(\alpha,\beta)\times (c,d)$,
$P_1=(\alpha,\beta)\times (v_0,d)$ and $P_2=(\alpha,\beta)\times
(v_0,v_1)$. Then, using analogous arguments, we obtain that the functions
$\varphi_P-\varphi_{P_1}$, $\varphi_{P_1}-\varphi_{P_2}$ and
$\varphi_{P_2}-\varphi$ are constant on $(\alpha,\beta)$. Therefore the function $\varphi_P-\varphi$ is constant on $(\alpha,\beta)$.

Without loss of generality we can suppose that
$\varphi_P-\varphi=0$ for every rectangle
$P=(\alpha,\beta)\times (c,d)\subseteq A$. Now by the similar arguments
we obtain that $\psi_{P_1}(v)=\psi_{P_2}(v)$ for every rectangles $P_1=(\alpha_1,\beta_1)\times (c_1,d_1)$,
$P_2=(\alpha_2,\beta_2)\times (c_2,d_2)\subseteq A$ and $v\in
(c_1,d_1)\cap(c_2,d_2)$. For every $v\in (v_0,v_2)$
choose $u\in (a,b)$ such that $(u,v)\in A$. Choose any open neighborhood
$P=(\alpha,\beta)\times (c,d)\subseteq A$ of $(u,v)$ and put $\psi(v)=\psi_P(v)$.\end{proof}

The following property of solutions of the equation $(3)$ plays an important role in the proof of the main result.

\begin{theorem}\label{t6}
Let $f:\mathbb R^2\to\mathbb R$ be a separately twice differentiable function such that $f''_{xx}(p)=f''_{yy}(p)$ for every $p\in\mathbb R^2$, $g=f'_x-f'_y$, $W=\{(x,y)\in\mathbb R^2: a\leq x-y\leq b, \, c\leq
x+y\leq d\}$ be a rectangle and $I$ be a non-empty open subset of $g(W)\subset\mathbb R$ such that $g$ is continuous at every point of the set
$E=g^{-1}(I)\cap W$. Then there exists a nonempty open segment
$(a_0,b_0)\subseteq (a,b)$ such that $\{(x,y)\in\mathbb R^2: a_0<
x-y< b_0, \, c\leq x+y\leq d\}\subseteq E$.
\end{theorem}

\begin{proof} Since $I$ is open in $g(W)$ and $g$ is continuous at every point of the set $E$, the set $E$ is open in $W$.

Put $u=x-y$ and $v=x+y$. Fix $(x_0,y_0)\in
E$. Since $E$ is open in $W$, there exists a nonempty closed segment $[a_1,b_1]\subseteq (a,b)$ such that $B=\{(x,y):a_1\leq
u\leq b_1, \, v=v_0=x_0+y_0\}\subseteq E$.

We shall show that there exists a nonempty open segment $(a_2,b_2)\subseteq
(a_1,b_1)$ such that $\{(x,y):a_2< u< b_2, \, v_0\leq v\leq
d\}\subseteq E$. Since $E$ is open in $W$, it is sufficient to prove that there exists $w\in(a_1,b_1)$ such that the compact set $\{(x,y):u=w, \, v_0\leq v\leq d\}$ is contained in $E$.

Assume the contrary, i.e. $\{(x,y): u=w, \,v_0\leq v\leq d\}\not\subseteq E$ for every $w\in (a_1,b_1)$. For arbitrary
$w\in (a_1,b_1)$ and $t\in(v_0,d]$ put $J_{w,t}=\{(x,y):u=w,\,
v_0\leq v<t\}$. For every $u\in (a_1,b_1)$ denote by
$r(u)$ the supremum of the set of all $v\in(v_0,d]$ such that $g$ is continuous on $J_{u,v}$. Since $E$ is a neighborhood of compact set $B$ in $W$ and $g$ is continuous at every point $p\in E$, the function $r:(a_1,b_1)\to\mathbb R$
is correctly defined and there exists $v_1\in(v_0,d]$ such that
$r(u)\geq v_1$ for every $u\in (a_1,b_1)$.

Let us show that $r$ satisfies the conditions of Lemma \ref{l2} on the segment $(a_1,b_1)$. Suppose the contrary, i.e. there exist $a_1<u_1<u_2<u_3<b_1$ such that
$r(u_2)<\min\{r(u_1),\,r(u_3)\}$. Put
$d_1=\inf\limits_{u\in(u_1,u_3)}r(u)$. Note that $v_0<v_1\leq
d_1\leq r(u_2)$. Since $g$ is continuous on each set
$J_{u,d_1}$ for $u\in(u_1,u_3)$, according to Theorem \ref{t2} and Proposition \ref{p7} $g$ is a constant on $J_{u,d_1}$. Since $B\subseteq E$ and $J_{u,d_1}\cap B\ne \O$,  $J_{u,d_1}\subseteq E$ for every $u\in (u_1,u_3)$. Therefore
$g$ is continuous at every point of the rectangle
$P=\bigcup\limits_{u\in(u_1,u_3)}J_{u,d_1}=\{(x,y):u_1<x-y<u_3,\,
v_0<x+y<d_1\}$.

According to Theorem \ref{t4}, there exist continuously differentiable functions
$\varphi:(u_1,u_3) \to \mathbb R$ and $\psi:(v_0,d_1) \to \mathbb R$
such that $f(x,y)=\varphi(x-y)+\psi(x+y)$ for every $(x,y)\in
P$. It follows from Lemma \ref{l1} that $g(x,y)=2\varphi'(x-y)$ for
$x+y=d_1$ and $x-y\in(u_1,u_3)$. Thus $g$ is  constant on every segment $J_w=\{(x,y):u=w, v_0\leq v\leq d_1\}$ for $w\in
(u_1,u_3)$. On the other hand, since $r(u_1)>r(u_2)\geq d_1$ and
$r(u_3)>r(u_2)\geq d_1$, $g$ is continuous on $J_{u_1}$ and $J_{u_3}$. Thus $g$ is constant on $J_{u_1}$ and $J_{u_3}$. Analogously as for the rectangle $P$ we obtain that $P_1=\{(x,y):u_1\leq u\leq u_3, \,
v_0\leq v\leq d_1\}\subseteq E$. Since $E$ is a neighborhood of the compact rectangle $P_1$ in $W$ and $d_1< r(u_3)<d$, there exists
$v_2\in(d_1,d)$ such that $W_1=\{(x,y)\in\mathbb R^2: u_1\leq u\leq
u_3, \, v_0\leq v\leq v_2\}\subseteq E$. In particular, $g$ is continuous at every point $p\in W_1$. Then $r(u)\geq v_2$ for every
$u\in[u_1,u_3]$. But this contradicts to the choice of $d_1$.

Thus  $r$ satisfies the conditions of Lemma \ref{l2} on the segment $(a_1,b_1)$. Therefore there exists an open segment
$(a_3,b_3)\subseteq (a_1,b_1)$ such that $r$ is monotone on $(a_3,b_3)$.

We claim that $r$ is constant on no open segment. Assume that $r(u)=d_0$ for all $u\in(a',b')\subseteq (a_1,b_1)$. Then according to Theorem \ref{t2} and Proposition \ref{p7} for every $u\in(a',b')$ the function $g$ is constant on the segment $J_{u,d_0}$ and equals to the value of $g$ at the point $p_u=(\frac{1}{2}(v_0-u),\frac{1}{2}(v_0+u))$. Since $p_u\in B\subseteq E$, $J_{u,d_0}\subseteq E$ for every $u\in(a',b')$. Thus $g$ is continuous on the rectangle $\bigcup\limits_{u\in(a',b')}J_{u,d_0}$. It follows from Theorem \ref{t4} and Lemma \ref{l1} that $g(\frac{1}{2}(d_0-u),\frac{1}{2}(d_0+u))=g(\frac{1}{2}(v_0-u),\frac{1}{2}(v_0+u))$ for every $u\in(a',b')$. Therefore $g$ is continuous at $(\frac{1}{2}(d_0-u),\frac{1}{2}(d_0+u))$, what contradicts to $r(u)=d_0$.

Hence $r$ is not constant on every open segment. Therefore $r$ is a strictly monotone function on $(a_3,b_3)$. Without loss of the generality we can suppose that $r$ strictly increases on $(a_3,b_3)$.
Since $A=\{(x,y):a_3<u<b_3,\,v_0<v<r(u)\}\subseteq
E$, $g$ is continuous at every point from open set $A$. Therefore according to Theorem \ref{t4} the functions $r:(a_3,b_3)\to\mathbb R$ and $\tilde{f}:\mathbb R^2\to\mathbb R$, $\tilde{f}(u,v)=f(x,y)$,
satisfy the conditions of Lemma \ref{l4}. Therefore there exist continuously differentiable functions $\varphi:(a_3,b_3) \to \mathbb R$ and $\psi:(v_0,v_3) \to \mathbb R$, where
$v_3=\sup\limits_{u\in(a_3,b_3)}r(u)$, such that
$f(x,y)=\varphi(x-y)+\psi(x+y)$ for every $(x,y)\in A$.

Let us show that $\tilde{f}(u,r(u))=\varphi(u)+\psi(r(u))$ for every
$u\in(a_3,b_3)$. Since $r$ strictly increases, $r(u)<v_3$, i.e. the function $\psi$ is defined at $r(u)$. Fix any strictly decreasing sequence $(w_n)_{n=1}^{\infty}$ of points
$w_n\in(a_3,b_3)$ which converges to $u$. Note that $w_n>u$. Therefore
$z_n=-w_n+u+r(u)<r(u)<r(w_n)$. Thus
$p_n=(\frac{1}{2}(w_n+z_n),\frac{1}{2}(z_n-w_n))=
(\frac{1}{2}(u+r(u)),\frac{1}{2}(u+r(u))-w_n)\in A$ for every
$n\in\mathbb N$. It follows from the continuity of $f$  with respect to $y$ that

$$\tilde{f}(u,r(u))=
\lim\limits_{n\to\infty}f(\frac{1}{2}(u+r(u)),\frac{1}{2}(u+r(u))-w_n)
=$$

$$\lim\limits_{n\to\infty}(\varphi(w_n)+ \psi(u+r(u)-w_n))=\varphi(u) + \psi(r(u)).$$

Besides, for the point $p=(\frac{1}{2}(u+r(u)),\frac{1}{2}(r(u)-u))$
we have
$$
f_y'(p)=\lim\limits_{n\to\infty}\frac{f(p_n)-f(p)}{u-w_n}=-\varphi'(u)+\psi'(r(u)).
$$

According to Lemma \ref{l3} choose $u_0\in (a_3,b_3)$
and a convergent to $u_0$ strictly monotone sequence
$(u_n)_{n=4}^{\infty}$ of points $u_n\in(a_3,b_3)$ such that
$v_n=u_n-u_0+r(u_0)\leq r(u_n)$ for $n\geq 4$. Since
$\tilde{f}(u,v)=\varphi(u)+\psi(v)$ for $a_3<u<b_3$ and $v_0<v\leq
r(u)$, $f(\frac{1}{2}(v_n+u_n),
\frac{1}{2}(v_n-u_n))=f(u_n+\frac{1}{2}(r(u_0)-u_0),
\frac{1}{2}(r(u_0)-u_0))=\tilde{f}(u_n,v_n)=\varphi(u_n)+\psi(v_n)$.

Put
$p_0=(\frac{1}{2}(r(u_0)+u_0),\frac{1}{2}(r(u_0)-u_0))$. We have

$$
f_x'(p_0)=\lim\limits_{n\to\infty}\frac{\tilde{f}(u_n,v_n)-\tilde{f}(u_0,r(u_0))}{u_n-u_0}=
$$
$$
=\lim\limits_{n\to\infty}\frac{\varphi(u_n)-\varphi(u_0)+\psi(u_n-u_0+r(u_0))-\psi(r(u_0))}{u_n-u_0}=\varphi'(u_0)+\psi'(r(u_0)).
$$

Thus $g(p_0)=f'_x(p_0)-f'_y(p_0)=2\varphi'(u_0)$. Hence $g$ is constant on the set $\{(x,y):u=u_0,\,v_0\leq v\leq
r(u_0)\}\subseteq E$. Since $r(u_0)<d$ and $E$ is open in $W$, there exists $\tilde{v}\in(r(u_0),d)$ such that $\{(x,y):u=u_0,\,v_0\leq
v\leq \tilde{v}\}\subseteq E$. But this contradicts the choice of
$r(u_0)$.

Thus there exists a nonempty open segment $(a_2,b_2)\subseteq(a_1,b_1)$
such that $\{(x,y):a_2<u<b_2,\,v_0\leq v\leq d\}\subseteq E$.
Using analogous arguments we obtain that there exists a nonempty open segment
$(a_0,b_0)\subseteq (a_2,b_2)$ such that
$\{(x,y):a_0<u<b_0,\,c\leq v\leq v_0\}\subseteq E$.\end{proof}

For a real-valued function $f:X\to
\mathbb R$ defined on a topological space $(X,\tau)$ and a point $x_0\in X$ let
$$\omega_f(x_0)=\inf\{\omega_f(U):x_0\in U\in\tau\}$$be the {\em oscillation of $f$ at $x_0$}, where $$\omega_f(U)=\mathrm{diam} f(U)=\sup\limits_{x',x''\in U}|f(x')-f(x'')|$$is the oscillation of $f$ on a subset $U\subset X$.

\begin{corollary}\label{c1}
Let $f:\mathbb R^2\to\mathbb R$ be a separately twice differentiable function such that
$f''_{xx}(p)=f''_{yy}(p)$ for every $p\in\mathbb R^2$, $D$ be the set of discontinuity points of the function $g=f'_x-f'_y$ and $W=\{(x,y)\in\mathbb R^2: a< x-y< b, \,
c< x+y<d\}$ be a rectangle such that the restriction $g|_{D\cap W}$ is continuous. Then the projection $\pi(D\cap W)$, where $\pi:\mathbb
R^2\to\mathbb R$, $\pi(x,y)=x-y$, is a meager subset of $\mathbb R$.
\end{corollary}

\begin{proof} For every $n\in\mathbb N$ put $D_n=\{p\in
D\cap W: \omega_g(p)\geq\frac{1}{n}\}$. Note that it is sufficient to prove that all sets $\pi(D_n)$ are meager.

Fix $n\in\mathbb N$. For every $p\in D_n$, use the continuity of the restriction $g|_{D\cap W}$  to choose a neighborhood $W_p=\{(x,y)\in \mathbb R^2: a_p\leq x-y\leq b_p,\,c_p\leq
x+y\leq d_p\}$ of $p$ such that $W_p\subseteq W$ and $\omega_g(D\cap
W_p)\leq \frac{1}{2n}$. Put $\widetilde{W}_p=\{(x,y)\in \mathbb
R^2: a_p< x-y< b_p,\,c_p< x+y< d_p\}$ and show that the set
$A_p=\pi(D_n\cap \widetilde{W}_p)$ is nowhere dense.

Suppose that the set $A_p$ is dense on a closed segment
$[\alpha,\beta]\subseteq (a_p,b_p)$. Put $W'=\{(x,y)\in
\mathbb R^2: \alpha\leq x-y\leq \beta,\,c_p\leq x+y\leq d_p\}$,
$I=\mathbb R\setminus \overline{g(D\cap W_p)}$ and pick any point $q\in D_n\cap \{(x,y)\in\mathbb R^2: \alpha< x-y< \beta, \,
c_p< x+y< d_p\}$. Since ${\rm diam}(\overline{g(D\cap
W_p)})\leq \frac{1}{2n}$ and ${\rm diam}(g(W'))=\omega_g(W')\geq
\omega_g(q)\geq \frac{1}{n}$, the set
$E=g^{-1}(I)\cap W'$ is nonempty. Besides, $E\cap D\subseteq
g^{-1}(I)\cap W_p\cap D=\O$. Therefore $g$ is continuous at every point from
$E$. Thus the function $g$ satisfies the conditions of Theorem \ref{t6} on the rectangle $W'$. Therefore there exists a nonempty open segment $(\alpha_0,\beta_0)\subseteq (\alpha,\beta)$ such that $\{(x,y)\in
\mathbb R^2: \alpha_0< x-y< \beta_0,\,c_p\leq x+y\leq
d_p\}\subseteq E$. It implies $A_p\cap(\alpha_0,\beta_0)=\O$, what contradicts to the density of $A_p$ on $[\alpha,\beta]$.

Hence all the sets $A_p$ are nowhere dense. Using the
$\sigma$-compactness of $D_n$ (see \cite{E} for definition) choose a sequence
$(p_m)_{m=1}^{\infty}$ of points $p_m\in D_n$ such that $D_n\subseteq
\bigcup\limits_{m=1}^{\infty}\widetilde{W}_{p_m}$. Then $\pi(D_n)\subseteq \bigcup\limits_{m=1}^{\infty} A_{p_m}$ and
$\pi(D_n)$ is a meager set.\end{proof}

\begin{remark}\label{r2}
The analogous result for the functions $h=f'_x+f'_y$ and $\pi(x,y)=x+y$ is valid too.
\end{remark}

The following statement will be used in the final stage of our reasoning.

\begin{theorem}\label{alpha} A continuous function $\alpha:\mathbb R\to\mathbb R$ is differentiable if for every $y\in\mathbb R$ the function $\beta_y(x)=\alpha(x)-\alpha(x+y)$ is differentiable.
\end{theorem}

\begin{proof} Assume that the function $\alpha$ is not differentiable but for every $y$ the function $\beta_y$ is differentiable.

\begin{claim}\label{cl1} The function $\alpha$ is nowhere differentiable.
\end{claim}

\begin{proof} Assume that $\alpha$ is differentiable at some point $x_0$. Then for every $y\in\mathbb R$ the function $\alpha_y(x)=\alpha(x) - \beta_y(x) =\alpha(x+y)$ is differentiable at $x_0$. Hence $\alpha$ is differentiable at $x_0+y$ for every $y\in\mathbb R$.
\end{proof}

For every $n\in\mathbb N$ put $$A_n=\{x\in\mathbb R:\forall x'\in(x,x+\tfrac1n)\;\; |\alpha(x)-\alpha(x')|\le n|x-x'|\}.$$

\begin{claim}\label{cl2} For every $n\in\mathbb N$ the set $A_n$ is closed and nowhere dense in $\mathbb R$.
\end{claim}

\begin{proof} The continuity of $\alpha$ implies the closedness of every $A_n$. Now assume that $A_n$ contains some interval $(a,b)$. Then $\alpha$ is Lipschitz on $(a,b)$ and hence is differentiable at some point, which is forbidden by Claim~\ref{cl1}.
\end{proof}

Claim~\ref{cl2} implies:

\begin{claim} The union $A_\infty=\bigcup_{n=1}^\infty A_n$ is of the first Baire category in $\mathbb R$.
\end{claim}

For every $n\in\mathbb N$ put $$B_n=\{(x,y)\in\mathbb R^2:\forall x'\in(x,x+\tfrac1n)\;\; |\beta_y(x)-\beta_y(x')|\le n|x-x'|\}.$$
Since the function $f(x,y)=\beta_y(x)$ is jointly continuous, every set $B_n$ is closed. Since the function $\beta_y$ is differentiable for each $y$, we conclude that $\mathbb R^2=\bigcup\limits_{n=1}^\infty B_n$ and by the Baire Theorem, for some $n_0\in\mathbb N$ the set $B_{n_0}$ has non-empty interior and hence contains some rectangle $(a,b)\times(c,d)$.

Since the set $A_\infty=\bigcup_{n=1}^\infty A_n$ is of the first Baire category, there is a point $x_0\in (a,b)\setminus A_\infty$. Now consider the function $\alpha$ restricted to the interval $(x_0+c,x_0+d)$.

Since the function $\alpha$ is nowhere differentiable, it is not monotone on $(x_0+c,x_0+d)$. Consequently, for the continuous function $\alpha$ there exists $z_0\in\mathbb R$ such that the set $C=\{x\in(x_0+c,x_0+d): \alpha(x)=z_0\}$ contains at least two points. Since $\alpha$ is not constant on every interval, there exist $u,v\in C$ such that $u<v$ and $(u,v)\cap C=\O$. It follows from the continuity of $\alpha$ that $\alpha(t)>z_0$ for all $t\in(u,v)$ or $\alpha(t)<z_0$ for all $t\in(u,v)$. Put $\varepsilon=v-u$.

\begin{claim}\label{cl4} For every $\delta\in(0,\varepsilon)$ there is a point $t\in (u,v-\delta)$ such that $\alpha(t)=\alpha(t+\delta)$.
\end{claim}

\begin{proof} It is sufficient to use the Mean Value Theorem for the function $g:[u,v-\delta]\to\mathbb R$, $g(x)=\alpha(x)-\alpha(x+\delta)$.
\end{proof}

Choose $m\geq n_0$ so large that $\frac1m\le \min\{b-x_0,\varepsilon\}$. Since $x_0\notin A_m$, there is a point $x_1\in (x_0,x_0+\frac1m)$ such that $|\alpha(x_0)-\alpha(x_1)|>m|x_0-x_1|$. For $\delta=x_1-x_0$ by Claim~\ref{cl4}, there is a point $t\in (u,v-\delta)$ such that $\alpha(t)=\alpha(t+\delta)$. Let $y_0=t-x_0$ and observe that $y_0\in (u-x_0,v-x_0)\subseteq (c,d)$.   Then $(x_0,y_0)\in (a,b)\times(c,d)\subseteq B_{n_0}$. On the other hand,
$$
|\beta_{y_0}(x_0)-\beta_{y_0}(x_1)|=|\alpha(x_0)-\alpha(x_0+y_0)-\alpha(x_1)+\alpha(x_1+y_0)|=$$
$$=|\alpha(x_0)-\alpha(t)-\alpha(x_1)+\alpha(t+\delta)|=|\alpha(x_0)-\alpha(x_1)|>m|x_0-x_1|\geq n_0|x_0-x_1|$$which contradicts to $(x_0,y_0)\in B_{n_0}$.
\end{proof}

\section{Main result}

In this section we prove the main result of the paper.
We first prove the following auxiliary facts.

\begin{proposition}\label{p8}
Let $X$ be a second countable topological space, $Y$ be a Baire space and
$G\subseteq W\subseteq X\times Y$ be open sets such that
$W\subseteq \overline{G}$. Then there exists a
dense $G_{\delta}$-set $B\subseteq Y$ such that for every $y\in B$
the set $G_y=\{x\in X: (x,y)\in G\}$ is dense in $W_y=\{x\in
X: (x,y)\in W\}$.
\end{proposition}

{\bf Prof.} Put $G_0=((X\times Y)\setminus
\overline{W})\cup G$ and fix a base
$(U_n)_{n=1}^{\infty}$ of topology of $X$. Since $G$ is dense in $W$, the set $G_0$ is dense in $X\times Y$. For every
$n\in\mathbb N$ denote by $B_n$ the set of all $y\in Y$
such that there exist a neighborhood $V$ of $y$ in $Y$ and an open in $X$
nonempty set $U\subseteq U_n$ such that $U\times V\subseteq
G_0$. Clearly, all the sets $B_n$ are open. Since $G_0$ is dense in $X\times Y$, $B_n$ is dense in $Y$. Thus
$B=\bigcap\limits_{n=1}^{\infty} B_n$ is a dense in $Y$
$G_{\delta}$-set such that for every $y\in B$ the set
$\{x\in X: (x,y)\in G_0\}$ is dense in $X$. Therefore for every $y\in B$
the set $G_y$ is dense in $W_y$.\end{proof}

\begin{proposition}\label{p9}
Let $W=\{(x,y)\in\mathbb
R^2:a<x-y<b,\,c<x+y<d\}$, $f:W\to \mathbb R$, for every $p\in W$ there exist the partial derivatives $f'_x(p)$ and $f'_y(p)$ which are continuous with respect to $x$ and $y$ respectively and $G$ 
be a dense open subset in $W$ such that $f'_x(p)=f'_y(p)=0$ for every $p\in G$. Then $f$ is constant.
\end{proposition}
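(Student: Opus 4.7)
The plan is to combine Proposition~\ref{p8} with the one-variable continuity of $f'_x$ and $f'_y$ to show that $f$ is constant on every horizontal and every vertical slice of $W$, and then to conclude by rectilinear connectedness of the open convex set $W$.

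First I would apply Proposition~\ref{p8} (with $X=Y=\mathbb R$) to the pair $G\subseteq W$ to obtain a dense $G_\delta$-set $B\subseteq\mathbb R$ such that $G_y:=\{x:(x,y)\in G\}$ is dense in $W_y:=\{x:(x,y)\in W\}$ for every $y\in B$. For each such $y$, the function $x\mapsto f'_x(x,y)$ is continuous on the open interval $W_y$ and vanishes on the dense subset $G_y$, so it vanishes on all of $W_y$, and one-variable calculus gives that $f(\cdot,y)$ is constant on $W_y$ for every $y\in B$.

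Next I would upgrade this to arbitrary $y$. Given $y_0\in\mathbb R$ and $x_1,x_2\in W_{y_0}$, openness of $W$ yields $\eta>0$ with $(x_i,y)\in W$ for all $|y-y_0|<\eta$ and $i=1,2$. Picking $y'\in B$ with $|y'-y_0|<\eta$ gives $f(x_1,y')=f(x_2,y')$; letting $y'\to y_0$ and using the continuity of $f(x_i,\cdot)$ in $y$ (which follows from the existence of $f'_y$) yields $f(x_1,y_0)=f(x_2,y_0)$. Thus $f(\cdot,y)$ is constant on $W_y$ for \emph{every} $y$. The symmetric argument, swapping $x$ and $y$ (using the density of $G$ and the continuity of $f'_y$ with respect to $y$), shows that $f(x,\cdot)$ is constant on $W^x$ for every $x$.

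Finally, since $W$ is open and convex, hence connected, any two of its points can be joined by an axis-aligned polygonal path inside $W$: each point has an open rectangle neighborhood in $W$, so the relation ``joinable by such a path in $W$'' has open equivalence classes and therefore only one class. On every horizontal or vertical leg of such a path, $f$ takes the same value at the two endpoints by the previous step, and so $f$ is constant on $W$. The only delicate point is the passage from a dense $G_\delta$-set of ``good'' $y$'s to \emph{all} $y$'s in the third paragraph: one must use both that $(x_i,y')$ stays in $W$ as $y'\to y_0$ and that $f(x_i,\cdot)$ is continuous in $y$, which is exactly why the hypothesis on the existence (not merely density-a.e.) of the partial derivatives is needed.
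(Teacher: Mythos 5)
Your proposal is correct and follows essentially the same route as the paper: apply Proposition~\ref{p8} to get dense families of horizontal and vertical slices on which the relevant partial derivative vanishes identically (hence $f$ is constant on each such slice), and then use separate continuity of $f$ together with the connectedness of $W$ to conclude. The only cosmetic difference is the order of the last two steps — you upgrade to \emph{all} slices first and then connect by axis-aligned paths, whereas the paper first notes constancy on the dense grid of slices and then invokes separate continuity — but this is the same argument.
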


\begin{proof} Using Proposition \ref{p8} choose sets $A\subseteq
(\frac{a+c}{2},\frac{b+d}{2})$ and $B\subseteq
(\frac{c-a}{2},\frac{d-b}{2})$ such that $A$ and $B$ are dense in $(\frac{a+c}{2},\frac{b+d}{2})$ and $(\frac{c-a}{2},\frac{d-b}{2})$, respectively, for every $x\in A$
the set $G^x=\{y\in\mathbb R: (x,y)\in G\}$ is dense in $W^x=\{y\in\mathbb R: (x,y)\in W\}$ and for every $y\in B$
the set $G_y=\{x\in\mathbb R: (x,y)\in G\}$ is dense in $W_y=\{x\in\mathbb R: (x,y)\in W\}$.

Fix any $y\in B$. Since $f'_x$ is continuous on $W_y\times\{y\}$ and $f'_x(p)=0$ for every $p\in G_y\times\{y\}$, $f'_x=0$ on $W_y\times\{y\}$. Thus for every $y\in B$ the function $f$ is constant on $W_y\times\{y\}$.

Analogously for every $x\in A$ the function $f$ is a constant on
$\{x\}\times W^x$. Since $A$ and $B$ are dense in $(\frac{a+c}{2},\frac{b+d}{2})$
and $(\frac{c-a}{2},\frac{d-b}{2})$ respectively, the function $f$ is a constant on the set $(\bigcup\limits_{x\in A}(\{x\}\times W^x))\bigcup
(\bigcup\limits_{y\in B}(W_y\times\{y\}))$. The continuity of $f$ with respect to each variable implies that $f$ is constant on $W$.\end{proof}

\begin{theorem}\label{t7}
Let $f:\mathbb R^2\to\mathbb R$ be a separately twice differentiable function such that
$f''_{xx}(p)=f''_{yy}(p)$ for every $p\in\mathbb R^2$. Then there exist twice differentiable functions $\varphi:\mathbb R \to
\mathbb R$ and $\psi:\mathbb R \to \mathbb R$ such that
$f(x,y)=\varphi(x-y)+\psi(x+y)$ for every $x,y\in \mathbb R$.
\end{theorem}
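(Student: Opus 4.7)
The plan is to split the proof into two stages: first, produce continuous functions $\varphi,\psi:\mathbb{R}\to\mathbb{R}$ with $f(x,y)=\varphi(x-y)+\psi(x+y)$; then upgrade them to twice differentiable ones via Theorem~\ref{alpha}. The first stage will draw on Theorems~\ref{t4} and~\ref{t5}, Corollary~\ref{c1}, Remark~\ref{r2}, Lemma~\ref{l4}, and Propositions~\ref{p8} and~\ref{p9}; the second will be a short application of Theorem~\ref{alpha}.

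For the first stage, I would start by applying Theorem~\ref{t5} to $f'_x$ and $f'_y$: both have nowhere dense discontinuity sets on every subset of $\mathbb{R}^2$, so $g=f'_x-f'_y$ and $h=f'_x+f'_y$ are jointly continuous on a common dense open set $G_0\subseteq\mathbb{R}^2$. Theorem~\ref{t4} then gives, on each open rectangle inside $G_0$, a local decomposition $f=\varphi_R(x-y)+\psi_R(x+y)$ with $\varphi_R,\psi_R$ continuously differentiable. To globalize, let $D$ and $D'$ be the discontinuity sets of $g$ and $h$ on $\mathbb{R}^2$, and write $\pi_-(x,y)=x-y$, $\pi_+(x,y)=x+y$. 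I would iterate Theorem~\ref{t5} transfinitely: define $D_0=D$ and $D_{\alpha+1}$ to be the discontinuity set of $g|_{D_\alpha}$ (nowhere dense in $D_\alpha$ by Theorem~\ref{t5}), with intersections at limit stages. A Cantor--Bendixson argument terminates the chain at $\emptyset$ at some countable ordinal, yielding a stratification of $D$ into countably many pieces $D_\alpha\setminus D_{\alpha+1}$ on each of which $g|_{D_\alpha}$ is continuous. Corollary~\ref{c1} then shows $\pi_-(D_\alpha\setminus D_{\alpha+1})$ is meager for every $\alpha$, so $\pi_-(D)$ is meager in $\mathbb{R}$. Remark~\ref{r2} gives the analogous meagerness of $\pi_+(D')$.

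For every $c$ outside the meager set $\pi_-(D)$, the line $\{x-y=c\}$ misses $D$, so $g$ is continuous on it; Theorem~\ref{t2} and Proposition~\ref{p7} then force $g$ to be constant on the line, with some value $\alpha(c)$. Analogously, $h$ is constant with value $\beta(c)$ on a comeager set of lines $\{x+y=c\}$. Combining this with the local decompositions on components of $G_0$, I would integrate $\alpha$ and $\beta$ locally to recover the $\varphi_R,\psi_R$, and then resolve the cocycle of additive constants $(\varphi_R+c_R,\psi_R-c_R)$ using Lemma~\ref{l4} together with Propositions~\ref{p8} and~\ref{p9}, so as to define $\varphi,\psi$ consistently on comeager subsets of $\mathbb{R}$; joint continuity of $f$ then extends them continuously to all of $\mathbb{R}$, giving $f(x,y)=\varphi(x-y)+\psi(x+y)$ everywhere.

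For the second stage, substituting $u=x-y_0$ into $x\mapsto f(x,y_0)$ shows that $u\mapsto\varphi(u)+\psi(u+2y_0)$ is twice differentiable for every $y_0$. Taking $y_0=0$ makes $\varphi+\psi$ twice differentiable, so every difference $\psi(u+2y_0)-\psi(u)$ is twice differentiable in $u$. Since $\psi$ is continuous and each $u\mapsto\psi(u)-\psi(u+t)$ is in particular differentiable, Theorem~\ref{alpha} yields that $\psi$ is differentiable. The local decomposition from Theorem~\ref{t4} makes $\psi'$ continuous on the dense open set $\pi_+(G_0)$, and since each $\psi'(u+t)-\psi'(u)$ is continuous in $u$, continuity of $\psi'$ extends to all of $\mathbb{R}$; a second application of Theorem~\ref{alpha} to $\psi'$ upgrades $\psi$ to twice differentiable, and an identical argument handles $\varphi$. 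The main obstacle is the gluing step in the third paragraph: the transfinite Cantor--Bendixson iteration of Theorem~\ref{t5} combined with the cocycle resolution using Lemma~\ref{l4} is technically delicate, and matching the additive constants consistently so as to produce \emph{single-variable} continuous $\varphi,\psi$ rather than merely a family of local decompositions is where the bulk of the work lies.
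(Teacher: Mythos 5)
Your two-stage architecture (first a decomposition with continuous, in fact $C^1$, functions $\varphi,\psi$; then an upgrade to twice differentiable via Theorem~\ref{alpha}) matches the paper's, and your second stage is essentially sound. The first stage, however, has genuine gaps at exactly the point you yourself flag as ``where the bulk of the work lies.'' First, the global meagerness of $\pi_-(D)$ is not established by your transfinite iteration. Corollary~\ref{c1} assumes that $g$ restricted to the \emph{entire} discontinuity set $D\cap W$ is continuous: its proof chooses, around each $p\in D_n$, a box $W_p$ with $\omega_g(D\cap W_p)\le\frac{1}{2n}$ and needs the set $E=g^{-1}(I)\cap W'$ to be disjoint from \emph{all} of $D$ so that $g$ is continuous on $E$ and Theorem~\ref{t6} applies. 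Theorem~\ref{t5} supplies one rectangle $W$ meeting $D$ with this property, not a cover of $D$ by such rectangles, and the corollary cannot be applied to a Cantor--Bendixson stratum $D_\alpha\setminus D_{\alpha+1}$: continuity of $g$ on the stratum gives no oscillation control over the rest of $D$ inside $W_p$, so $E$ may meet $D$. (Termination of the iteration at a countable ordinal is also unjustified, since the relative discontinuity sets need not be closed.) Second, even granting that $g$ (resp.\ $h$) is constant on a comeager family of lines $x-y=c$ (resp.\ $x+y=c$), the resulting function $\alpha(c)$ is defined only on a comeager set with no regularity, so ``integrating $\alpha$'' and ``resolving the cocycle'' is not yet an argument; and your final extension step invokes joint continuity of $f$, which separately (twice) differentiable functions need not possess.

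The paper avoids both difficulties by arguing by contradiction inside a single rectangle: assuming both $D_1$ (discontinuities of $g$) and $D_2$ (discontinuities of $h$) are nonempty, it uses Theorem~\ref{t5} and Corollary~\ref{c1} to produce one rectangle $W_1$ meeting both, with $\pi_u(D_1\cap W_1)$ and $\pi_v(D_2\cap W_1)$ meager; it then builds $C^1$ functions $\varphi,\psi$ on a subrectangle $W_2$ via Theorem~\ref{t6}, and applies Proposition~\ref{p9} to $f-\varphi(x-y)-\psi(x+y)$ on $W_2$ to conclude that $h$ is constant there, contradicting $D_2\cap W_2\ne\emptyset$. Hence $g$ or $h$ is continuous on all of $\mathbb R^2$, and Theorem~\ref{t3} (or its analogue for $h$) gives the global $C^1$ decomposition with no gluing across exceptional lines at all. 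To salvage your route you would need either to prove global meagerness of $\pi_-(D)$ by other means and then carry out the gluing in full detail without assuming joint continuity of $f$, or to adopt the paper's dichotomy.
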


\begin{proof} Firstly we prove that there exist continuously differentiable functions $\varphi:\mathbb R \to
\mathbb R$ and $\psi:\mathbb R \to \mathbb R$ such that
$f(x,y)=\varphi(x-y)+\psi(x+y)$ for every $x,y\in \mathbb R$.

The cases of continuous function $g=f'_x-f'_y$ and continuous function
$h=f'_x+f'_y$ are considered in Theorem \ref{t4}.

Let $U=V=\mathbb R$, $U\ni u=x-y$ and $V\ni v=x+y$ for $x\in
X=\mathbb R$ and $y\in Y=\mathbb R$, $\pi_u:U\times V\to U$,
$\pi_u(u,v)=u$, $\pi_v:U\times V\to V$, $\pi_v(u,v)=v$, and the function
$\tilde{f}, \tilde{g}, \tilde{h}:U\times V\to\mathbb R$ are defined by
equalities: $\tilde{f}(u,v)=f(x,y)$,
$\tilde{g}(u,v)=g(x,y)$ ³ $\tilde{h}(u,v)=h(x,y)$. Assume that $g$ and $h$ are discontinuous, i.e. the sets $D_1$ and
$D_2$ of points of discontinuity of $\tilde{g}$ and $\tilde{h}$ are nonempty.

We claim that there exist nonempty open segments $(a_1,b_1)\subseteq U$ and
$(c_1,d_1)\subseteq V$ and dense subsets $A\subseteq (a_1,b_1)$
and $B\subseteq (c_1,d_1)$ which satisfy the following conditions:

$(a)$\,\,\,$D\cap W_1\ne\O$, where $W_1=(a_1,b_1)\times (c_1,d_1)$ and
$D=D_1\cup D_2$;

$(b)$\,\,\,for every $u\in A$ the function $\tilde{g}$ is constant on $\{u\}\times (c_1,d_1)$;

$(c)$\,\,\,for every $v\in B$ the function $\tilde{h}$ is constant on $(a_1,b_1)\times \{v\}$.

It follows from Theorem \ref{t5} that the restriction $\tilde{g}|_{D_1}$ has nowhere dense discontinuity points set. Since $D_1\ne\O$, there exists an open in $U\times V$ set
$\tilde{W}_1=(\tilde{a}_1,\tilde{b}_1)\times(\tilde{c}_1,\tilde{d}_1)$
such that $D_1\cap \tilde{W}_1\ne\O$ and the function $\tilde{g}|_{D_1}$ is continuous at every point $p\in D_1\cap \tilde{W}_1$. According to Corollary \ref{c1}, the set $\pi_u(D_1\cap \tilde{W}_1)$ is a meager set.

Note that $D_2\cap \tilde{W}_1\ne\O$. Indeed, if $\tilde{h}$
is continuous on $\tilde{W}_1$, then according to Theorem \ref{t4} the function
$\tilde{g}$ is continuous on $\tilde{W}_1$.

Analogously using Remark \ref{r2} we find a rectangle $W_1=(a_1,b_1)\times(c_1,d_1)\subseteq \tilde{W}_1$ such that
$D_2\cap W_1\ne \O$ and the set $\pi_v(D_2\cap {W}_1)$ is meager. Clearly, $W_1$ satisfies $(a)$.
Put $A=(a_1,b_1)\setminus \pi_u(D_1\cap {W}_1)$ and
$B=(c_1,d_1)\setminus \pi_v(D_2\cap {W}_1)$. The conditions $(b)$ and $(c)$ follow from Theorem \ref{t2} and Proposition \ref{p7}.

Note that according to Theorem \ref{t4}, the condition $(a)$ is equivalent to
$D_1\cap W_1\ne\O$ and $D_2\cap W_1\ne\O$. Pick any $p\in D_1\cap W_1$. Since $W_1\subseteq\tilde{W}_1$, the function
$\tilde{g}|_{D_1}$ is continuous at $p$. Therefore there exists a neighborhood
$\tilde{W}_2=[\tilde{a}_2, \tilde{b}_2]
\times[\tilde{c}_2,\tilde{d}_2]\subseteq W_1$ of $p$ such that
$\omega_{\tilde{g}}(\tilde{W}_2\cap D_1)\leq
\frac{1}{2}\omega_{\tilde{g}}(p)$. Reasoning analogously as in the proof of Corollary \ref{c1}, according to Theorem \ref{t6}, find an open nonempty segment $(a'_2,b'_2)\subseteq (\tilde{a}_2, \tilde{b}_2)$ and continuously differentiable functions $\varphi_1:(a'_2,b'_2)\to\mathbb R$ and
$\psi:(\tilde{c}_2,\tilde{d}_2)\to\mathbb R$ such that
$\tilde{f}(u,v)=\varphi_1(u)+\psi(v)$, in particular,
$\tilde{h}(u,v)=2\psi'(v)$ for every $u\in (a'_2,b'_2)$ and
$v\in (\tilde{c}_2,\tilde{d}_2)$. According to $(c)$ we have that $\tilde{h}(u,v)=2\psi'(v)$ for every $u\in
(a_1,b_1)$ and $v\in B\cap(\tilde{c}_2,\tilde{d}_2)$.

Analogously we find an open neighborhood
$W_2=(a_2,b_2)\times(c_2,d_2)\subseteq \tilde{W}_2$ of some point
$q\in D_2$ and continuously differentiable function
$\varphi:(a_2,b_2)\to\mathbb R$ such that
$\tilde{g}(u,v)=2\varphi'(u)$ for every $u\in A\cap(a_2,b_2)$
and $v\in(c_1,d_1)$.

Consider the function $f_0:W_2\to\mathbb R$,
$f_0(u,v)=\varphi(u)+\psi(v)$. According to Theorem \ref{t5} choose an open dense in $W_2$ set $G\subseteq W_2$ such that the functions $\tilde{g}$ and $\tilde{h}$ are continuous at every point from $G$.
Pick a family $(P_s:s\in S)$ of rectangles
$P_s=(a_s,b_s)\times(c_s,d_s)\subseteq U\times V$ such that
$G=\bigcup\limits_{s\in S} P_s$. It follows from Theorem \ref{t4} that for every $s\in S$ there exist continuously differentiable functions $\varphi_s:(a_s,b_s)\to\mathbb R$ and $\psi_s:(c_s,d_s)\to\mathbb R$
such that $\tilde{f}(u,v)=\varphi_s(u)+\psi_s(v)$ for every $(u,v)\in P_s$. Taking into account that $(a_s,b_s)\subseteq
(a_2,b_2)\subseteq (a_1,b_1)$ and $(c_s,d_s)\subseteq
(\tilde{c}_2,\tilde{d}_2)\subseteq (c_1,d_1)$ we obtain that
$\varphi'_s(u)=\varphi'(u)$ for every $u\in A\cap(a_s,b_s)$ and
$\psi'_s(v)=\psi'(v)$ for every $v\in B\cap(c_s,d_s)$. Since the functions $\varphi'_s$, $\psi'_s$, $\varphi'$ and $\psi'$ are continuous and the sets $A$ and $B$ are dense in $(a_s,b_s)$ and $(c_s,d_s)$
respectively, the functions $\varphi_s-\varphi$ and $\psi_s-\psi$ are constant on $(a_s,b_s)$ and $(c_s,d_s)$ respectively. Thus the function
$\tilde{f}-f_0$ is constant on every rectangle $P_s$.

Now for the function $\gamma:Z\to\mathbb R$,
$\gamma(x,y)=f(x,y)-\varphi(x-y)-\psi(x+y)$, defined on the set
$Z=\{(x,y)\in X\times Y: (u,v)\in W_2\}$, we have
$\gamma'_x(p)=\gamma'_y(p)=0$ for every $p=(x,y)$ such that
$(u,v)\in G$. Therefore according to Proposition \ref{p9} the function $\gamma$ is constant. This implies that $\tilde{h}$ is a constant on
$W_2$. But it contradicts to $q\in D_2\cap W_2$.

Thus $f(x,y)=\varphi(x-y)+\psi(x+y)$ for some continuously differentiable functions $\varphi$ and $\psi$. It remains to prove that $\varphi$ and $\psi$ are twice differentiable.

Put $\alpha(x)=\varphi'(x)$ and $\beta(x)=\psi'(x)$. Note that $f'_x(x,y)=\alpha(x-y)+\beta(x+y)$. Since $f$ is twice differentiable with respect to $x$, for every $a\in\mathbb R$ the function $\gamma_a(x)=\alpha(x)+\beta(x+a)$ is differentiable. Therefore for every $a\in\mathbb R$ the function $\theta_a(x)=\alpha(x)-\alpha(x+a)=\gamma_a(x)-\gamma_0(x+a)$ is differentiable. Hence according to Theorem~\ref{alpha}, the function $\alpha$ is differentiable. The differentiability of the function $\beta$ can be proved by analogy.\end{proof}

\small{

}

\end{document}